\newcommand{\cC}{\mathcal{C}}
\newcommand{\cM}{\mathcal{M}}
  \newtheorem{proposition}{Proposition}[section]
  \newtheorem{theorem}[proposition]{Theorem}
  \theoremstyle{definition}
  \newtheorem{definition}[proposition]{Definition}
  \newtheorem{example}[proposition]{Example}
  \theoremstyle{remark}
  \newtheorem{remark}[proposition]{Remark}
\begin{document}

\title{Hom-entwining structures and Hom-Hopf-type modules}
\author{SERKAN KARA\c{C}UHA}

\address{Department of Mathematics, FCUP, University of Porto, Rua Campo Alegre
687, 4169-007 Porto, Portugal}
\email{s.karacuha34@gmail.com}
\keywords{Hom-coring, Hom-entwining structure, entwined Hom-module, (alternative) Hom-Doi-Koppinen data, unifying Hom-Hopf module, generalized Hom-Yetter-Drinfeld module}

\begin{abstract}The notions of Hom-coring, Hom-entwining structure and associated entwined Hom-module are introduced. A theorem regarding base ring extension of a Hom-coring is proven and then is used to acquire the Hom-version of Sweedler coring \cite{Sweedler2}. Motivated by \cite{Brzezinski2}, a Hom-coring associated to an entwining Hom-structure is constructed and an identification of entwined Hom-modules with Hom-comodules of this Hom-coring is shown. The dual algebra of this Hom-coring is proven to be a $\psi$-twisted convolution algebra. By a construction, it is shown that a Hom-Doi-Koppinen datum comes from a Hom-entwining structure and that the Doi-Koppinen Hom-Hopf modules are the same as the associated entwined Hom-modules, following \cite{Brzezinski1}. A similar construction regarding an alternative Hom-Doi-Koppinen datum is also given. A collection of Hom-Hopf-type modules are gathered as special examples of Hom-entwining structures and corresponding entwined Hom-modules, and structures of all relevant Hom-corings are also considered.
\end{abstract}

\maketitle
\section{Introduction}
Motivated by the study of symmetry properties of noncommutative principal bundles, {\it entwining structures} (over a commutative ring $k$ with unit) were introduced in \cite{BrzezinskiMajid} as a triple $(A,C)_{\psi}$ consisting of a $k$-algebra $A$, a $k$-coalgebra $C$ and a $k$-module map $\psi:C\otimes A \to A\otimes C$ satisfying, for all $a, a'\in A$ and $c\in C$,
$$(aa')_{\kappa}\otimes c^{\kappa}=a_{\kappa}a'_{\lambda}\otimes c^{\kappa\lambda}\:,\:\: 1_{\kappa}\otimes c^{\kappa}=1\otimes c,$$
$$a_{\kappa}\otimes c^{\kappa}_{\ 1}\otimes c^{\kappa}_{\ 2}=a_{\lambda\kappa}\otimes c_{1}^{\ \kappa}\otimes c_{2}^{\ \lambda}\:,\:\: a_{\kappa}\varepsilon(c^{\kappa})=a\varepsilon(c), $$
where the notation $\psi(c\otimes a)=a_{\kappa}\otimes c^{\kappa}$ (summation over $\kappa$ is understood) is used. Given an entwining structure $(A,C)_{\psi}$, the notion of $(A,C)_{\psi}$-entwined module $M$ was first defined in \cite{Brzezinski1} as a right $A$-module with action $m\otimes a\mapsto m\cdot a $ and a right $C$-comodule with coaction $\rho^{M}:m\mapsto m_{(0)}\otimes m_{(1)}$ (summation understood) such that the following compatibility condition holds:
$$\rho^{M}(m\cdot a)=m_{(0)}\cdot a_{\kappa}\otimes m_{(1)}^{\ \ \ \kappa}\:,\: \: \forall a\in A, m\in M.$$

{\it Hopf-type modules} are typically the objects with an action of an algebra and a coaction of a coalgebra which satisfy some compatibility condition. The family of Hopf-type modules includes well known examples such as Hopf modules of Sweedler \cite{Sweedler1}, relative Hopf modules of Doi and Takeuchi \cite{Doi1}, \cite{Takeuchi}, Long dimodules \cite{Long}, Yetter-Drinfeld modules \cite{RadfordTowber}, \cite{Yetter}, Doi-Koppinen Hopf modules \cite{Doi2}, \cite{Koppinen} and alternative Doi-Koppinen Hopf modules of Schauenburg \cite{Schauenburg}. All these modules except alternative Doi-Koppinen modules are special cases of Doi-Koppinen modules. As newer special cases of them, the family of Hopf-type modules also includes anti-Yetter-Drinfeld modules which were obtained as coefficients for the cyclic cohomology of Hopf algebras \cite{HajacKhalkhaliRangipourSommerhauser1}, \cite{HajacKhalkhaliRangipourSommerhauser2}, \cite{JaraStefan}, and their generalizations termed $(\alpha, \beta)$-Yetter-Drinfeld modules \cite{PanaiteStaic} (also called $(\alpha, \beta)$-equivariant $C$-comodules in \cite{KaygunKhalkhali}). Basically, entwining structures and modules associated to them enable us to unify several categories of Hopf modules in the sense that the compatibility conditions for all of them can be restated in the form of the above condition required for entwined modules. One can refer to \cite{BrzezinskiWisbauer} and \cite{CaenepeelMilitaruZhu} for more information on the relationship between entwining structures and Hopf-type modules.

Hom-type algebras have been introduced in the form of Hom-Lie algebras in \cite{HartwigLarssonSilvestrov}, where the Jacobi identity was twisted along a linear endomorphism. Meanwhile, Hom-associative algebras have been suggested in \cite{MakhloufSilvestrov} to give rise to a Hom-Lie algebra using the commutator bracket. Other Hom-type structures such as Hom-coalgebras, Hom-bialgebras, Hom-Hopf algebras and their properties have been considered in \cite{MakhloufSilvestrov1,MakhloufSilvestrov2,Yau}. The so-called \textit{twisting principle} has been introduced in \cite{Yau1} to provide Hom-type generalization of algebras, and it has been used to obtain many more properties of Hom-bialgebras  and Hom-Hopf algebras; for instance see \cite{AmmarMakhlouf,FregierGohr,Gohr,MakhloufPanaite,Yau2,Yau5}. The authors of \cite{CaenepeelGoyvaerts} investigated the counterparts of Hom-bialgebras and Hom-Hopf algebras in the context of tensor categories, and termed them {\it monoidal} Hom-bialgebras and monoidal Hom-Hopf algebras with slight variations in their definitions. Further properties of monoidal Hom-Hopf algebras and many structures on them have been lately studied \cite{ChenWangZhang},\cite{ChenWangZhang1},\cite{ChenZhang}, \cite{ChenZhang1}, \cite{GuoChen}, \cite{Karacuha}, \cite{LiuShen}.

Entwining structures have been generalized to weak entwining structures in \cite{CaenepeelDeGroot} to define Doi-Koppinen data for a weak Hopf algebra, motivated by \cite{Bohm}. Thereafter, it has been proven in \cite{Brzezinski2} that both entwined modules and weak entwined modules are comodules of certain type of corings which built on a tensor product of an algebra and a coalgebra, and shown that various properties of entwined modules can be obtained from properties of comodules of a coring. Here we recall from \cite{Sweedler2} that for an associative algebra $A$ with unit,  an $A$-{\it coring} is an $A$-bimodule $\mathcal{C}$ with $A$-bilinear maps $\Delta_{\mathcal{C}}:\mathcal{C} \to \mathcal{C}\otimes_A\mathcal{C},\: c\mapsto c_1\otimes c_2$ called coproduct and $\varepsilon_{\mathcal{C}}:\mathcal{C}\to A$ called counit, such that
$$\Delta_{\mathcal{C}}(c_1)\otimes c_2=c_1\otimes \Delta_{\mathcal{C}}(c_2),\: \varepsilon_{\mathcal{C}}(c_1)c_2=c=c_1\varepsilon_{\mathcal{C}}(c_2),\: \forall c \in \mathcal{C}. $$
Given an $A$-coring $\mathcal{C}$, a {\it right} $\mathcal{C}$-{\it comodule} is a right $A$-module $M$ with a right $A$-linear map $\rho^{M}:M\to M\otimes \mathcal{C},\: m\mapsto m_{(0)}\otimes m_{(1)}$ called coaction, such that
$$\rho^{M}(m_{(0)})\otimes m_{(1)}=m_{(0)}\otimes \Delta_{\mathcal{C}}(m_{(1)}), \: m=m_{(0)}\varepsilon_{\mathcal{C}}(m_{(1)}),\: \forall m\in M.$$

The main aim of the present paper is to generalize the entwining structures, entwined modules and the associated corings within the framework of monoidal Hom-structures and then to study Hopf-type modules in the Hom-setting. The idea is to replace algebra and coalgebra in a classical entwining structure with a monoidal Hom-algebra and a monoidal Hom-coalgebra to make a definition of Hom-entwining structures and associated entwined Hom-modules. Following \cite{Brzezinski2}, these entwined Hom-modules are identified with Hom-comodules of the associated Hom-coring. The dual algebra of this Hom-coring is proven to be the Koppinen smash. Furthermore, we give a construction regarding Hom-Doi-Kopinen datum and Doi-Koppinen Hom-Hopf modules as special cases of Hom-entwining structures and associated entwined Hom-modules. Besides, we introduce alternative Hom-Doi-Koppinen datum. By using these constructions, we get Hom-versions of the aforementioned Hopf-type modules as special cases of entwined Hom-modules, and give examples of Hom-corings in addition to trivial Hom-coring and canonical Hom-coring.

Throughout the paper $k$ will be a commutative ring with unit. Unadorned tensor product is over $k$.

\section{Preliminaries}
Let $\cM_k=(\cM_k,\otimes,k,a,l,r)$ be the monoidal category of $k$-modules. We associate to $\cM_k$ a new monoidal category $\mathcal{H}(\cM_k)$ whose objects are ordered pairs $(M,\mu)$,  with $M\in\mathcal{M}_k$ and $\mu\in Aut_k(M)$, and morphisms $f:(M,\mu)\to(N,\nu)$ are morphisms $f:A\to B$ in $\mathcal{M}_k$ satisfying $\nu\circ f=f\circ\mu.$ The monoidal structure is given by $(M,\mu)\otimes(N,\nu)=(M\otimes N,\mu\otimes\nu)$ and $(k,1)$. If we speak briefly, all monoidal Hom-structures are objects in the tensor category $\mathcal{\widetilde{H}}(\cM_k)=(\mathcal{H}(\cM_k),\otimes,(k,id),\tilde{a},\tilde{l},\tilde{r})$ introduced in (\cite{CaenepeelGoyvaerts}), with the associativity constraint $\tilde{a}$ defined by
 \begin{equation}\tilde{a}_{A,B,C}=a_{A,B,C}\circ((\alpha\otimes id)\otimes \gamma^{-1})=(\alpha\otimes (id\otimes \gamma^{-1}))\circ a_{A,B,C},\end{equation} for $(A,\alpha)$, $(B,\beta)$, $(C,\gamma)$ $\in$ $\mathcal{H}(\cM_k)$, and the right and left unit constraints $\tilde{r}$, $\tilde{l}$ given by
  \begin{equation}\tilde{r}_A=\alpha\circ r_A=r_A\circ(\alpha\otimes id);\:\: \tilde{l}_A=\alpha\circ l_A=l_A\circ(id\otimes\alpha), \end{equation}
which we write elementwise:
$$\tilde{a}_{A,B,C}((a\otimes b)\otimes c)=\alpha(a)\otimes(b\otimes\gamma^{-1}(c)), $$
$$\tilde{l}_A(x\otimes a)=x\alpha(a)=\tilde{r}_A(a\otimes x).$$
The category $\mathcal{\widetilde{H}}(\cM_k)$ is termed {\it Hom-category} associated to $\mathcal{M}_k$, where a $k$-submodule $N\subset M$ is called a subobject of $(M,\mu)$ if $(N,\mu_{|_N})$ $\in$ $\mathcal{\widetilde{H}}(\cM_k)$, that is $\mu$ restricts to an automorphism of $N$.

We now recall some definitions of monoidal Hom-structures from \cite{CaenepeelGoyvaerts}. An algebra in $\widetilde{\mathcal{H}}(\mathcal{M}_k)$ is called a monoidal Hom-algebra, i.e., an object $(A,\alpha) \in \widetilde{\mathcal{H}}(\mathcal{M}_k)$ together with a $k$-linear map $m:A\otimes A\to A,\: a\otimes b\mapsto ab$ and an element $1_A \in A$ such that
\begin{equation}\label{monoidal-Hom-alg-cond-1}\alpha(a)(bc)=(ab)\alpha(c)\:\: ;\:\: a1_A=1_Aa=\alpha(a),\end{equation}
\begin{equation}\label{monoidal-Hom-alg-cond-2}\alpha(ab)=\alpha(a)\alpha(b)\:\: ;\:\: \alpha(1_A)=1_A\end{equation}
for all $a,b,c \in A$.  A right $(A,\alpha)$-Hom-module consists of an object $(M,\mu) \in \widetilde{\mathcal{H}}(\mathcal{M}_k)$ together with a $k$-linear map $\psi:M\otimes A\to M,\: \psi(m\otimes a)=ma$, in $\widetilde{\mathcal{H}}(\mathcal{M}_k)$ satisfying the following
   \begin{equation}\label{Hom-action-cond-1}\mu(m)(ab)= (ma)\alpha(b)\:; \: m1_A=\mu(m)\end{equation}
   for all $m\in M$ and $a, b \in A$. For $\psi$ to be a morphism in $\widetilde{\mathcal{H}}(\mathcal{M}_k)$ means
   \begin{equation}\label{Hom-action-cond-2}\mu(ma)= \mu(m)\alpha(a).\end{equation} The map  $\psi$ is termed a right Hom-action of $(A,\alpha)$ on $(M,\mu)$. Let $(M,\mu)$ and $(N,\nu)$ be two right $(A,\alpha)$-Hom-modules. We call a morphism $f:M\to N$ right $(A,\alpha)$-linear if $f\circ\mu=\nu\circ f$ and $f(ma)=f(m)a$ for all $m \in M$ and $a\in A$.

 Let $(A,\alpha)$ and $(B,\beta)$ be two monoidal Hom-algebras. A {\it left} $(A,\alpha)$, {\it right} $(B,\beta)$ {\it Hom-bimodule} (for short $[(A,\alpha),(B,\beta)]$-Hom-bimodule), consists of an object $(M,\mu) \in \widetilde{\mathcal{H}}(\mathcal{M}_k)$ together with a left $(A,\alpha)$-Hom-action $\phi:A\otimes M\to M$, $\phi(a\otimes m)=am$ and a right $(B,\beta)$-Hom-action $\varphi:M\otimes B\to M$, $\varphi(m\otimes b)=mb$ fulfilling the compatibility condition, for all $a\in A$, $b\in B$ and $m\in M$,
\begin{equation}(am)\beta(b)=\alpha(a)(mb). \end{equation}
Let $(M,\mu)$ and $(N,\nu)$ be two $[(A,\alpha),(B,\beta)]$-Hom-bimodules. A morphism $f:M\to N$ is called a morphism of $[(A,\alpha),(B,\beta)]$-Hom-bimodules if it is both left $(A,\alpha)$-linear and right $(B,\beta)$-linear, and satisfies the following property
\begin{equation}(af(m))\beta(b)=\alpha(a)(f(m)b),\end{equation}
for all $a\in A$, $b\in B$ and $m\in M$. Of course, any monoidal Hom-algebra $(A,\alpha)$ is a $[(A,\alpha),(A,\alpha)]$-Hom-bimodule ($(A,\alpha)$-Hom-bimodule, for short). Moreover, if $f:A\rightarrow B$ is a morphism of monoidal Hom-algebras $(A,\alpha)$ and $(B,\beta)$, then naturally $(B,\beta)$ is a $[(A,\alpha),(A,\alpha)]$-Hom-bimodule along $f$, i.e., the left and right $(A,\alpha)$-Hom-action on $(B,\beta)$ is given via $f$.

Let $(M,\mu)$ be a right $(A,\alpha)$-Hom-module and $(N,\nu)$ be a left $(A,\alpha)$-Hom-module. The tensor product $(M\otimes_A N, \mu\otimes\nu)$ of $(M,\mu)$ and $(N,\nu)$ over $(A,\alpha)$ is the coequalizer of
$\rho\otimes id_N,\: (id_M\otimes \bar{\rho})\circ \tilde{a}_{M,A,N}: (M\otimes A)\otimes N\to M\otimes N$, where $\rho$ and $\bar{\rho}$ are the right and left Hom-actions of $(A,\alpha)$ on $(M,\mu)$ and $(N,\nu)$ respectively. That is,
\begin{equation}\label{tensor-prod-over-Hom-alg} m\otimes_A n=\{m\otimes n \in M\otimes N| \: ma\otimes n=\mu(m)\otimes a\nu^{-1}(n),\forall a \in A\}.\end{equation}

\section{Hom-corings and Hom-Entwining structures}

Analogous with monoidal Hom-algebras, one defines monoidal Hom-coalgebras as coalgebras in  $\widetilde{\mathcal{H}}(\mathcal{M}_k)$. More generally we define the notion of a Hom-coring.
\begin{definition}
 \begin{enumerate}
\item Let $(A,\alpha)$ be a monoidal Hom-algebra. An $(A,\alpha)$-{\it Hom-coring} consists of an $(A,\alpha)$-Hom-bimodule $(\mathcal{C},\chi)$ together with $A$-bilinear maps $\Delta_{\mathcal{C}}: \mathcal{C}\to \mathcal{C}\otimes_A\mathcal{C}$, $c\mapsto c_1\otimes c_2$ and $\varepsilon_{\mathcal{C}}:\mathcal{C}\to A $ called {\it comultiplication} and {\it counit} such that

\begin{equation}\label{Hom-coring-cond-1}\chi^{-1}(c_1)\otimes \Delta_{\mathcal{C}}(c_2)=c_{11}\otimes(c_{12}\otimes\chi^{-1}(c_2));\ \ \varepsilon_{\mathcal{C}}(c_1)c_2=c=c_1\varepsilon_{\mathcal{C}}(c_2),
\end{equation}

\begin{equation}\label{Hom-coring-cond-2}\Delta_{\mathcal{C}}(\chi(c))=\chi(c_1)\otimes \chi(c_2);\ \ \varepsilon_{\mathcal{C}}(\chi(c))=\alpha(\varepsilon_{\mathcal{C}}(c)).
\end{equation}
\item A {\it right $(\mathcal{C},\chi)$-Hom-comodule } $(M,\mu)$ is defined as a right $(A,\alpha)$-Hom-module with a right $A$-linear map $\rho: M\to M\otimes_A \mathcal{C}$, $m\mapsto m_{(0)}\otimes m_{(1)}$ satisfying
    \begin{equation}\label{Hom-coaction-cond-1}\mu^{-1}(m_{(0)})\otimes \Delta_{\mathcal{C}}(m_{(1)})=m_{(0)(0)}\otimes (m_{(0)(1)}\otimes \chi^{-1}(m_{(1)}));\ \ m=m_{(0)}\varepsilon_{\mathcal{C}}(m_{(1)}),
    \end{equation}
    \begin{equation}\label{Hom-coaction-cond-2}\mu(m)_{(0)}\otimes \mu(m)_{(1)}=\mu(m_{(0)})\otimes\chi(m_{(1)}).
    \end{equation}
\end{enumerate}
\end{definition}

A monoidal Hom-coalgebra $(C,\gamma)$ is simply a coalgebra in the category $\widetilde{\mathcal{H}}(\mathcal{M}_k)$. It is not hard to see that  $(C,\gamma)$ is a monoidal Hom-coalgebra if and only if it is a $(k,id)$-Hom-coring, i.e. an object $(C,\gamma) \in \widetilde{\mathcal{H}}(\mathcal{M}_k)$ together with $k$-linear maps $\Delta:C\to C\otimes C,\:\Delta(c)=c_1\otimes c_2$ and $\varepsilon :C\to k$ satisfying equations (\ref{Hom-coring-cond-1}) and (\ref{Hom-coring-cond-2}).
A right Hom-comodule $(M,\mu)$ over a monoidal Hom-coalgebra $(C,\gamma)$ is a right $(C,\gamma)$-Hom-comodule over the $(k,id)$-Hom-coring $(C,\gamma)$, i.e., an object $(M,\mu) \in \widetilde{\mathcal{H}}(\mathcal{M}_k)$ together with a $k$-linear map $\rho:M\to M\otimes C,\: \rho(m)=m_{[0]}\otimes m_{[1]}$, in $\widetilde{\mathcal{H}}(\mathcal{M}_k)$ such that (\ref{Hom-coaction-cond-1}) and (\ref{Hom-coaction-cond-2}) hold. The map $\rho$ is called a right Hom-coaction of $(C,\gamma)$ on $(M,\mu)$. Let $(M,\mu)$ and $(N,\nu)$ be two right $(C,\gamma)$-Hom-comodules, then we call a morphism $f:M\to N$ right $(C,\gamma)$-colinear if $f\circ \mu=\nu\circ f$ and $f(m_{[0]})\otimes m_{[1]}=f(m)_{[0]}\otimes f(m)_{[1]}$ for all $m \in M$.

\begin{theorem}\label{base-ring-extension-of-Hom-coring}Let $\phi:(A,\alpha)\to (B,\beta)$ be a morphism of monoidal Hom-algebras. Then, for an $(A,\alpha)$-Hom-coring $(\mathcal{C},\chi)$,  $(B\mathcal{C})B=((B\otimes_{A}\mathcal{C})\otimes_{A}B,(\beta\otimes\chi)\otimes\beta)$ is a $(B,\beta)$-Hom-coring, called a {\it base ring extension} of the $(A,\alpha)$-Hom-coring $(\cC,\chi)$, with a comultiplication and a counit,
\begin{equation}\Delta_{(B\mathcal{C})B}((b\otimes_A c)\otimes_A b')=((\beta^{-1}(b)\otimes_A c_1)\otimes_A 1_B)\otimes_B ((1_B\otimes_A c_2)\otimes_A\beta^{-1}(b')),
\end{equation}
\begin{equation}\varepsilon_{(B\mathcal{C})B}((b\otimes_A c)\otimes_A b')=(b\phi(\varepsilon_{\mathcal{C}}(c)))b'.\end{equation}
\end{theorem}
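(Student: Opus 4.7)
The plan is to verify, in order, the following four statements: (i) the object $((B\otimes_A\cC)\otimes_A B,(\beta\otimes\chi)\otimes\beta)$ lies in $\ctH{\cM_k}$ and carries a natural $(B,\beta)$-Hom-bimodule structure; (ii) the formulas proposed for $\Delta_{(B\cC)B}$ and $\varepsilon_{(B\cC)B}$ descend to well-defined maps on the tensor product over $A$; (iii) these maps are $B$-bilinear and intertwine the twist $(\beta\otimes\chi)\otimes\beta$ as in (\ref{Hom-coring-cond-2}); (iv) the Hom-coassociativity and counit axioms (\ref{Hom-coring-cond-1}) are satisfied. Throughout I will use that $\phi$ is a Hom-algebra map, so $\beta\circ\phi=\phi\circ\alpha$, and that the given element-wise description (\ref{tensor-prod-over-Hom-alg}) lets one transfer factors of $\phi(a)$ across $\otimes_A$ at the cost of twisting by $\beta^{\pm 1}$ and $\chi^{\mp 1}$.

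For (i), the left and right $B$-actions are the obvious outer multiplications, $b''\cdot ((b\otimes c)\otimes b')=(b''\beta^{-1}(b)\otimes\chi^{-1}(c))\otimes\beta^{-1}(b')$-type formulas coming from the Hom-bimodule structure on each iterated tensor factor; I will record these and check the compatibility condition for Hom-bimodules using the Hom-associativity of $(B,\beta)$. For (ii), I would check that replacing a tensor $b\phi(a)\otimes_A c$ by $\beta(b)\otimes_A a\chi^{-1}(c)$ on either of the two $A$-balanced slots leaves $\Delta_{(B\cC)B}$ and $\varepsilon_{(B\cC)B}$ unchanged. For $\varepsilon_{(B\cC)B}$ this uses only that $\varepsilon_{\cC}$ is $A$-bilinear together with the Hom-associativity in $B$. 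For $\Delta_{(B\cC)B}$ the key input is that $\Delta_{\cC}$ is left and right $A$-linear, so that moving $a$ past the coproduct acts on $c_1$ or $c_2$ in a way that can be re-absorbed by the $\beta^{-1}$ factors on the outer $B$'s via (\ref{tensor-prod-over-Hom-alg}).

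For (iii), $B$-bilinearity of both maps is a direct unwinding of the definitions once (ii) is known; the intertwining property $\Delta_{(B\cC)B}\circ((\beta\otimes\chi)\otimes\beta)=(((\beta\otimes\chi)\otimes\beta)\otimes_B((\beta\otimes\chi)\otimes\beta))\circ\Delta_{(B\cC)B}$ reduces, after cancelling external $\beta^{\pm 1}$, to the identity $\Delta_{\cC}(\chi(c))=\chi(c_1)\otimes\chi(c_2)$ from (\ref{Hom-coring-cond-2}) for $\cC$, and the analogous identity for $\varepsilon_{(B\cC)B}$ follows from $\varepsilon_{\cC}\circ\chi=\alpha\circ\varepsilon_{\cC}$ together with $\beta\circ\phi=\phi\circ\alpha$. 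For (iv), writing $D=(b\otimes c)\otimes b'$, a direct computation gives
\[
D_{1}=(\beta^{-1}(b)\otimes c_1)\otimes 1_B,\qquad D_2=(1_B\otimes c_2)\otimes\beta^{-1}(b'),
\]
and applying $\Delta_{(B\cC)B}$ once more to each of $D_1,D_2$ shows that Hom-coassociativity for $(B\cC)B$ is equivalent to
\[
\chi^{-1}(c_1)\otimes c_{21}\otimes c_{22}=c_{11}\otimes c_{12}\otimes\chi^{-1}(c_2),
\]
which is exactly the first identity of (\ref{Hom-coring-cond-1}) applied to $(\cC,\chi)$. The counit axiom reduces similarly to $\varepsilon_{\cC}(c_1)c_2=c=c_1\varepsilon_{\cC}(c_2)$, after moving $\phi(\varepsilon_{\cC}(c_i))$ across $\otimes_A$ using (\ref{tensor-prod-over-Hom-alg}) and the Hom-unit law $b\cdot 1_B=\beta(b)$.

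The main obstacle I anticipate is bookkeeping in (ii): the $\beta^{-1}$ and $\chi^{-1}$ twists appearing in the explicit formula for $\Delta_{(B\cC)B}$ interact non-trivially with the balancing relation $ma\otimes n=\mu(m)\otimes a\nu^{-1}(n)$, and one must be attentive to the fact that the relation has to be checked at both of the two $\otimes_A$ positions and at both of the two $\otimes_B$ positions in the image of $\Delta_{(B\cC)B}$. Once these bookkeeping identities are settled, the remaining axioms are formal consequences of the corresponding Hom-coring axioms of $(\cC,\chi)$ and the Hom-algebra axioms of $(B,\beta)$.
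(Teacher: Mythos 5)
Your plan follows essentially the same route as the paper: a direct verification of $(B,\beta)$-bilinearity, the intertwining with the twist map, Hom-coassociativity and the counit axiom, each reduced to the corresponding axiom of $(\cC,\chi)$ via the balancing relation (\ref{tensor-prod-over-Hom-alg}), and the reductions you identify (coassociativity to the first identity of (\ref{Hom-coring-cond-1}), the intertwining to (\ref{Hom-coring-cond-2}), the counit axiom to $\varepsilon_{\cC}(c_1)c_2=c=c_1\varepsilon_{\cC}(c_2)$) are exactly the ones the paper's computations carry out. One caution on step (i): the left $(B,\beta)$-action must read $b''\cdot((b\otimes_A c)\otimes_A b')=(\beta^{-2}(b'')b\otimes_A\chi(c))\otimes_A\beta(b')$ --- the passive factors are twisted up by $\chi$ and $\beta$, not down as in the formula you sketch --- otherwise the Hom-associativity and compatibility checks will not close.
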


\begin{proof} For $b,b',b''\in B$ and $c\in \cC$,
\begin{eqnarray*}\lefteqn{\Delta_{(B\mathcal{C})B}(((b'\otimes_A c)\otimes_A b'')b)}\hspace{4em}\\
&=&\Delta_{(B\mathcal{C})B}((\beta(b')\otimes_A\chi(c))\otimes_A b''\beta^{-1}(b))\\
&=&((b'\otimes_A \chi(c)_1)\otimes_A 1_B)\otimes_B((1_B \otimes_A \chi(c)_2)\otimes_A\beta^{-1}(b''\beta^{-1}(b)))\\
&\overset{(\ref{Hom-coring-cond-2})}{=}&((b'\otimes_A \chi(c_1))\otimes_A 1_B)\otimes_B((1_B \otimes_A \chi(c_2))\otimes_A\beta^{-1}(b'')\beta^{-2}(b))\\
&=&((b'\otimes_A \chi(c_1))\otimes_A 1_B)\otimes_B((\beta\otimes\chi)(1_B \otimes_A c_2)\otimes_A\beta^{-1}(b'')\beta^{-1}(\beta^{-1}(b)))\\
&=&((\beta\otimes\chi)\otimes\beta)((\beta^{-1}(b')\otimes_A c_1)\otimes_A 1_B)\otimes_B((1_B\otimes_A c_2)\otimes_A\beta^{-1}(b''))\beta^{-1}(b)\\
&=&\Delta_{(B\mathcal{C})B}((b'\otimes_A c)\otimes_A b'')b,
\end{eqnarray*}
which proves the right $(B,\beta)$-linearity of $\Delta_{(B\mathcal{C})B}$. It can also be shown that $\Delta_{(B\mathcal{C})B}\circ \bar{\chi}=(\bar{\chi}\otimes\bar{\chi})\circ\Delta_{(B\mathcal{C})B}$, where $\bar{\chi}=(\beta\otimes\chi)\otimes\beta$. And as well, the left $(B,\beta)$-linearity of $\Delta_{(B\mathcal{C})B}$ and the fact that it preserves the compatibility condition between the left and right $(B,\beta)$-Hom-actions on $(B\cC)B$ can be checked similarly, that is,
$$\Delta_{(B\mathcal{C})B}(b((b'\otimes_A c)\otimes_A b''))=b\Delta_{(B\mathcal{C})B}((b'\otimes_A c)\otimes_A b''),$$
$$(b\Delta_{(B\mathcal{C})B}((b''\otimes_A c)\otimes_A b'''))\beta(b')=\beta(b)(\Delta_{(B\mathcal{C})B}((b''\otimes_A c)\otimes_A b''')b').$$
Next we prove the Hom-coassociativity of $\Delta_{(B\mathcal{C})B}$:

\begin{eqnarray*}\lefteqn{((\beta^{-1}\otimes \chi^{-1})\otimes \beta^{-1})(((b\otimes_A c)\otimes_A b')_1)\otimes_B\Delta_{(B\mathcal{C})B}(((b\otimes_Ac)\otimes_Ab')_2)}\hspace{6em}\\
&=&((\beta^{-2}(b)\otimes_A \chi^{-1}(c_1))\otimes_A1_B)\otimes_B(((1_B\otimes_A c_{21})\otimes_A 1_B)\\
&&\otimes_B((1_B\otimes_A c_{22})\otimes_A\beta^{-2}(b')))\\
&\overset{(\ref{Hom-coring-cond-1})}{=}&((\beta^{-2}(b)\otimes_A c_{11})\otimes_A1_B)\otimes_B(((1_B\otimes_A c_{12})\otimes_A 1_B)\\
&&\otimes_B((1_B\otimes_A \chi^{-1}(c_2))\otimes_A\beta^{-2}(b')))\\
&=&((\beta^{-1}(b)\otimes_A c_1)\otimes_A1_B)_1\otimes_B(((\beta^{-1}(b)\otimes_A c_1)\otimes_A1_B)_2\\
&&\otimes_B((1_B\otimes_A \chi^{-1}(c_2))\otimes_A\beta^{-2}(b')))\\
&=&((b\otimes_A c)\otimes_A b')_{11}\otimes_B(((b\otimes_A c)\otimes_A b')_{12}\\
&&\otimes_B((\beta^{-1}\otimes \chi^{-1})\otimes \beta^{-1})((b\otimes_A c)\otimes_A b')_2).
\end{eqnarray*}
Now we demonstrate that $\varepsilon_{(B\mathcal{C})B}$ is left $(B,\beta)$-linear:

\begin{eqnarray*}\lefteqn{\varepsilon_{(B\mathcal{C})B}(b((b'\otimes_A c)\otimes_A b''))}\hspace{4em}\\
&=&\varepsilon_{(B\mathcal{C})B}((\beta^{-2}(b)b'\otimes_A \chi(c))\otimes_A\beta(b''))\\
&=&((\beta^{-2}(b)b')\phi(\varepsilon_{\cC}(\chi(c))))\beta(b'')
\overset{(\ref{Hom-coring-cond-2})}{=}((\beta^{-2}(b)b')\phi(\alpha(\varepsilon_{\cC}(c))))\beta(b'')\\
&=&(\beta^{-1}(b)(b'\beta^{-1}(\phi(\alpha(\varepsilon_{\cC}(c))))))\beta(b'')=(\beta^{-1}(b)(b'\phi(\varepsilon_{\cC}(c))))\beta(b'')\\
&=&b((b'\phi(\varepsilon_{\cC}(c)))b'')=b\varepsilon_{(B\mathcal{C})B}((b'\otimes_A c)\otimes_A b''),
\end{eqnarray*}
where $\phi\circ\alpha=\beta\circ\phi$ was used in the fifth equality. Additionally, we have

\begin{eqnarray*}(\varepsilon_{(B\mathcal{C})B}\circ\bar{\chi})((b\otimes_A c)\otimes_A b')&=&(\beta(b)\phi(\varepsilon_{\cC}(\chi(c))))\beta(b')\\
&=&\beta((b\phi(\varepsilon_{\cC}(c)))b')=(\beta\circ\varepsilon_{\cC})((b\otimes_A c)\otimes_A b'),
\end{eqnarray*}
meaning $\varepsilon_{(B\mathcal{C})B}\in\mathcal{\widetilde{H}}(\cM_k)$. In the same manner, one can show that $\varepsilon_{(B\mathcal{C})B}$ is right $(B,\beta )$-linear and it preserves the compatibility condition between the left and right $(B,\beta)$-Hom-actions on $(B\cC)B$, i.e.,
$$\varepsilon_{(B\mathcal{C})B}(((b'\otimes_A c)\otimes_A b'')b)=\varepsilon_{(B\mathcal{C})B}((b'\otimes_A c)\otimes_A b'')b,$$
$$(b\varepsilon_{(B\mathcal{C})B}((b''\otimes_A c)\otimes_A b'''))\beta(b')=\beta(b)(\varepsilon_{(B\mathcal{C})B}((b''\otimes_A c)\otimes_A b''')b').$$
Below, we prove the counity condition:
\begin{eqnarray*}\lefteqn{((\beta^{-1}(b)\otimes_A c_1)\otimes_A 1_B)\varepsilon_{(B\mathcal{C})B}((1_B\otimes_A c_2)\otimes_A\beta^{-1}(b'))}\hspace{8em}\\
&=&((\beta^{-1}(b)\otimes_A c_1)\otimes_A 1_B)((1_B\phi(\varepsilon_{\cC}(c_2)))\beta^{-1}(b'))\\
&=&((\beta^{-1}(b)\otimes_A c_1)\otimes_A 1_B)(\beta(\phi(\varepsilon_{\cC}(c_2)))\beta^{-1}(b'))\\
&=&(b\otimes_A \chi(c_1))\otimes_A \beta(\phi(\varepsilon_{\cC}(c_2)))\beta^{-1}(b')\\
&=&(b\otimes_A \chi(c_1))\otimes_A \phi(\alpha(\varepsilon_{\cC}(c_2)))\beta^{-1}(b')\\
&\overset{(\ref{tensor-prod-over-Hom-alg})}{=}&(\beta^{-1}(b)\otimes c_1)\alpha(\varepsilon_{\cC}(c_2))\otimes_A b'\\
&=&(b\otimes_A c_1\varepsilon_{\cC}(c_2))\otimes_A b'\\
&\overset{(\ref{Hom-coring-cond-2})}{=}&(b\otimes_A c)\otimes_A b'\\
&\overset{(\ref{Hom-coring-cond-2})}{=}&(b\otimes_A \varepsilon_{\cC}(c_1)c_2)\otimes_A b'\\
&\overset{(\ref{tensor-prod-over-Hom-alg})}{=}&(\beta^{-1}(b)\phi(\varepsilon_{\cC}(c_1))\otimes_A\chi(c_2))\otimes_A b'\\
&=&(\beta^{-2}(b\phi(\alpha(\varepsilon_{\cC}(c_1))))1_B\otimes_A\chi(c_2))\otimes_A\beta(\beta^{-1}(b'))\\
&=&(b\phi(\alpha(\varepsilon_{\cC}(c_1))))((1_B\otimes_A c_2)\otimes_A\beta^{-1}(b'))\\
&=&((\beta^{-1}(b)\phi(\varepsilon_{\cC}(c_1)))1_B)((1_B\otimes_A c_2)\otimes_A\beta^{-1}(b'))\\
&=&\varepsilon_{(B\mathcal{C})B}((\beta^{-1}(b)\otimes_A c_1)\otimes_A 1_B)((1_B\otimes_A c_2)\otimes_A\beta^{-1}(b')),
\end{eqnarray*}
which completes the proof that given a morphism of monoidal Hom-algebras $\phi:(A,\alpha)\to (B,\beta)$, $((B\otimes_{A}\mathcal{C})\otimes_{A}B,(\beta\otimes\chi)\otimes\beta)$ is a $(B,\beta)$-Hom-coring.
\end{proof}

\begin{example}
 A monoidal Hom-algebra $(A,\alpha)$ has a natural $(A,\alpha)$-Hom-bimodule structure with its Hom-multiplication. $(A,\alpha)$ is an $(A,\alpha)$-Hom-coring by the canonical isomorphism $A\to A\otimes_A A$, $a\mapsto \alpha^{-1}(a)\otimes 1_A$, in $\mathcal{\widetilde{H}}(\cM_k)$, as a comultiplication and the identity $A\to A$ as a counit. This Hom-coring is called a {\it trivial} $(A,\alpha)$-Hom-coring.
\end{example}

\begin{example}Let $\phi:(B,\beta)\to(A,\alpha)$ be a morphism of monoidal Hom-algebras. Then $(\cC,\chi)=(A\otimes_B A, \alpha\otimes\alpha)$ is an $(A,\alpha)$-Hom-coring with comultiplication
$$\Delta_{\cC}(a\otimes_B a')=(\alpha^{-1}(a)\otimes_B 1_A)\otimes_A (1_A\otimes_B \alpha^{-1}(a'))=(\alpha^{-1}(a)\otimes_B 1_A)\otimes_B a'$$
and counit $\varepsilon_{\cC}(a\otimes_B a')=aa'$ for all $a,a'\in A$.
\end{example}
\begin{proof} By Theorem (\ref{base-ring-extension-of-Hom-coring}), for $\phi:(B,\beta)\to(A,\alpha)$ and the trivial $(B,\beta)$-Hom-coring $(B,\beta)$ with $\Delta_B(b)=\beta^{-1}(b)\otimes_B 1_B$ and $\varepsilon_B(b)=b$, we have the base ring extension of the trivial $(B,\beta)$-Hom-coring $(B,\beta)$ to $(A,\alpha)$-Hom-coring $(AB)A=((A\otimes_B B)\otimes_B A,(\alpha\otimes\beta)\otimes\alpha)$ with

$$\Delta_{(AB)A}((a\otimes_B b)\otimes_B a')=((\alpha^{-1}(a)\otimes_B \beta^{-1}(b))\otimes_B 1_A)\otimes_A((1_A\otimes_B 1_B)\otimes_B \alpha^{-1}(a')),$$

$$\varepsilon_{(AB)A}((a\otimes_B b)\otimes_B a')=(a\phi(b))a'.$$
On the other hand we have the isomorphism $\varphi:A \to A\otimes_B B, \:\: a\mapsto \alpha^{-1}(a)\otimes_B 1_B$, in $\mathcal{\widetilde{H}}(\cM_k)$, with the inverse $\psi:A\otimes_B B\to A,\:\: a\otimes_B b \mapsto a\phi(b)$: For $a\in A$ and $b\in B$,

$$\psi(\varphi(a))=\alpha^{-1}(a)\phi(1_B)=\alpha^{-1}(a)1_A=a,$$

\begin{eqnarray*}\varphi(\psi(a\otimes_B b))&=&\varphi(a\phi(b))=\alpha^{-1}(a\phi(b))\otimes_B 1_B\\
&=&\alpha^{-1}(a)\alpha^{-1}(\phi(b))\otimes_B 1_B=\alpha^{-1}(a)\phi(\beta^{-1}(b))\otimes_B 1_B\\
&=&a\otimes_B \beta^{-1}(b)1_B=a\otimes_B b,
\end{eqnarray*}
in addition one can check that $\alpha\circ\psi=\psi\circ(\alpha\otimes\beta) $ and $(\alpha\otimes\beta)\circ\varphi=\varphi\circ\alpha$. Thus,
$(AB)A\overset{\psi\otimes 1}{\simeq}A\otimes_B A=\cC$ and

$$\Delta_{\cC}(a\otimes_B b)=((\psi\otimes id)\otimes(\psi\otimes id))\circ\Delta_{(AB)A}\circ(\varphi\otimes id)(a\otimes_B b)= (\alpha^{-1}(a)\otimes_B 1_A)\otimes_A (1_A\otimes_B \alpha^{-1}(a')),$$
$$\varepsilon_{\cC}(a\otimes_B a')=\varepsilon_{(AB)A}\circ(\varphi\otimes id)(a\otimes_B a')=aa'.$$
$(A\otimes_B A, \alpha\otimes\alpha)$ is called the {\it Sweedler} or {\it canonical} $(A,\alpha)$-{\it Hom-coring} associated to a monoidal Hom-algebra extension $\phi: (B,\beta)\to (A,\alpha)$.
\end{proof}
For the monoidal Hom-algebra $(A,\alpha)$ and the $(A,\alpha)$-Hom-coring $(\cC,\chi)$, let us put $^{*}{\cC}=\: _{A}Hom^{\mathcal{H}}(\cC,A)$, consisting of left $(A,\alpha)$-linear morphisms $f:(\cC,\chi)\to(A,\alpha)$, that is, $f(ac)=af(c)$ for $a\in A$, $c\in \cC$ and $f\circ\chi=\alpha\circ f$. Similarly, ${\cC}^{*}={Hom}_{A}^{\mathcal{H}}(\cC,A)$ and $^{*}{\cC}^{*}=\:_{A}{Hom}_{A}^{\mathcal{H}}(\cC,A)$ consist of right $(A,\alpha)$-Hom-module maps and $(A,\alpha)$-Hom-bimodule maps, respectively. Now we prove that these modules of $(A,\alpha)$-linear morphisms $\cC\to A$ have ring structures.

\begin{proposition}\label{dual-rings-of-Hom-coring}
\begin{enumerate}
\item $^{*}{\cC}$ is an associative algebra with unit $\varepsilon_{\cC}$ and multiplication
$$(f*^{l}g)(c)=f(c_1g(c_2))$$
for $f,g\in\: ^{*}{\cC}$ and $c\in \cC$.

\item ${\cC}^{*}$ is an associative algebra with unit $\varepsilon_{\cC}$ and multiplication
$$(f*^{r}g)(c)=g(f(c_1)c_2)$$
for $f,g\in {\cC}^{*}$ and $c\in \cC$.

\item $^{*}{\cC}^{*}$ is an associative algebra with unit $\varepsilon_{\cC}$ and multiplication
$$(f*g)(c)=f(c_1)g(c_2) $$
for $f,g\in\: ^{*}\cC^{*}$ and $c\in \cC$.
\end{enumerate}
\end{proposition}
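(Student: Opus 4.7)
The plan is to verify, in each of the three cases, the same three ingredients: first that the stated formula defines a map in the appropriate module of $A$-linear morphisms (hence descends to $\cC\otimes_A\cC$), second that $\varepsilon_\cC$ is a two-sided unit, and third associativity. The three cases are mutually parallel, so I will work out case (3) in detail and indicate how (1) and (2) differ. The main tools are Hom-coassociativity (\ref{Hom-coring-cond-1}), Hom-associativity (\ref{monoidal-Hom-alg-cond-1}), the counit axiom, the right Hom-action axiom (\ref{Hom-action-cond-1}), the tensor-product-over-$A$ relation (\ref{tensor-prod-over-Hom-alg}), and the morphism property $f\circ\chi=\alpha\circ f$ for elements of $\widetilde{\mathcal{H}}(\cM_k)$.

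For well-definedness in case (3), I check that the assignment $c_1\otimes c_2\mapsto f(c_1)g(c_2)$ vanishes on the $\otimes_A$ relation $xa\otimes y=\chi(x)\otimes a\chi^{-1}(y)$: using right $A$-linearity of $f$, left $A$-linearity of $g$, and Hom-associativity in $A$, one computes $f(xa)g(y)=(f(x)a)g(y)=\alpha(f(x))(a\alpha^{-1}(g(y)))=f(\chi(x))g(a\chi^{-1}(y))$. Bilinearity of the resulting map $f*g:\cC\to A$ then follows from the $(A,\alpha)$-bilinearity of $\Delta_\cC$ and the linearities of $f,g$, while the compatibility with $\chi,\alpha$ follows from (\ref{Hom-coring-cond-2}) together with the Hom-action axiom $\chi(c_1)\alpha(g(c_2))=\chi(c_1 g(c_2))$. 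In case (1), the intermediate element $c_1 g(c_2)$ lives in $\cC$ rather than $A$, and well-definedness on $\otimes_A$ uses exactly the right Hom-action axiom $\chi(x)(ab)=(xa)\alpha(b)$ with $b=\alpha^{-1}(g(y))$; case (2) is symmetric, using left $A$-linearity of $g$ and left Hom-action.

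For associativity in case (3), the computation is
\begin{eqnarray*}
((f*g)*h)(c)&=&(f(c_{11})g(c_{12}))\,h(c_2)\\
&=&\alpha(f(c_{11}))\bigl(g(c_{12})\,\alpha^{-1}(h(c_2))\bigr)\\
&=&f(\chi(c_{11}))\bigl(g(c_{12})\,h(\chi^{-1}(c_2))\bigr)\\
&=&f(c_1)\bigl(g(c_{21})h(c_{22})\bigr)\ =\ (f*(g*h))(c),
\end{eqnarray*}
where the second equality is Hom-associativity in $A$, the third uses $f,h\in\widetilde{\mathcal{H}}(\cM_k)$, and the fourth is Hom-coassociativity (\ref{Hom-coring-cond-1}) after applying $\chi\otimes\mathrm{id}\otimes\mathrm{id}$ to rewrite $\chi(c_{11})\otimes_A c_{12}\otimes_A\chi^{-1}(c_2)=c_1\otimes_A c_{21}\otimes_A c_{22}$. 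The analogous associativity proofs for (1) and (2) run the same identity but absorb the $A$-valued $g(c_2)$ (resp.\ $f(c_1)$) through the right (resp.\ left) Hom-action on $\cC$ instead of through the multiplication of $A$.

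The unit law is short: for (3), right $A$-linearity of $f$ gives $f(c_1)\varepsilon_\cC(c_2)=f(c_1\varepsilon_\cC(c_2))=f(c)$, while left $A$-linearity of $g$ gives $\varepsilon_\cC(c_1)g(c_2)=g(\varepsilon_\cC(c_1)c_2)=g(c)$; the corresponding identities in (1) and (2) are obtained by combining the counit axiom $\varepsilon_\cC(c_1)c_2=c=c_1\varepsilon_\cC(c_2)$ with the linearity of $\varepsilon_\cC$ and of the remaining factor. The main obstacle is purely bookkeeping: the Hom-coassociativity identity (\ref{Hom-coring-cond-1}) introduces $\chi^{-1}$ on the outer factors, and these twists must be absorbed at the right moment using $f\circ\chi=\alpha\circ f$ and Hom-associativity in $A$, so that after these translations the two bracketings of $f*g*h$ agree on the nose rather than up to a $\chi$-correction.
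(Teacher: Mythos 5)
Your proof is correct and follows essentially the same route as the paper's: the same chain (Hom-associativity in $A$, the morphism property $f\circ\chi=\alpha\circ f$, then Hom-coassociativity (\ref{Hom-coring-cond-1})) gives associativity, and the unit laws follow from the counit axiom plus $A$-linearity; the only differences are that you detail case (3) where the paper details case (1), and that you additionally verify balancedness over $\otimes_A$, which the paper leaves implicit. One small slip in your remark on case (2): since both $f,g\in\cC^{*}$ are right $A$-linear, the well-definedness there uses the right $A$-linearity of $f$ (to write $f(xa)=f(x)a$) together with the left Hom-action axiom, not "left $A$-linearity of $g$".
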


\begin{proof}
\begin{enumerate}
\item For $f,g,h\in\: ^{*}{\cC}$ and $c\in \cC$,

 \begin{eqnarray*}((f*^{l}g)*^{l}h)(c)&=&f((c_1h(c_2))_1g((c_1h(c_2))_2))=f(\chi(c_{11})g(c_{12}\alpha^{-1}(h(c_2))))\\
 &=&f(\chi(c_{11})g(c_{12}h(\chi^{-1}(c_2))))\overset{(\ref{Hom-coring-cond-1})}{=}f(c_1g(c_{21}h(c_{22})))\\
 &=&(f*^{l}(g*^{l}h))(c),
 \end{eqnarray*}
where the second equality comes from the fact that $\Delta_{\cC}$ is right $(A,\alpha)$-linear, i.e., $\Delta_{\cC}(ca)=(ca)_1\otimes_A(ca)_2=\Delta_{\cC}(c)a=(c_1\otimes_A c_2)a=\chi(c_1)\otimes_A c_2\alpha^{-1}(a)$, $\forall c\in \cC$, $a\in A$.

$$(f*^{l}\varepsilon_{\cC})(c)=f(c_1\varepsilon_{\cC}(c_2))=f(c),$$
$$(\varepsilon_{\cC}*^{l}f)(c)=\varepsilon_{\cC}(c_1f(c_2))=\varepsilon_{\cC}(c_1)f(c_2)=f(\varepsilon_{\cC}(c_1)c_2)=f(c).$$
By similar computations one can prove (2) and (3).
\end{enumerate}
\end{proof}

\begin{definition} A (right-right) {\it Hom-entwining structure} is a triple $[(A,\alpha), (C,\gamma)]_{\psi}$ consisting of a monoidal Hom-algebra $(A,\alpha)$, a monoidal Hom-coalgebra $(C,\gamma)$ and a $k$-linear map $\psi: C\otimes A\to A\otimes C$ in $\widetilde{\mathcal{H}}(\mathcal{M}_k)$ satisfying the following conditions for all $a,a'\in A$, $c\in C$:
\begin{equation}\label{hom-entwining-cond-1}(aa')_{\kappa}\otimes \gamma(c)^{\kappa}=a_{\kappa}a'_{\lambda}\otimes \gamma(c^{\kappa\lambda}),\end{equation}
\begin{equation}\label{hom-entwining-cond-2}\alpha^{-1}(a_{\kappa})\otimes c^{\kappa}_{\ 1}\otimes c^{\kappa}_{\ 2}=\alpha^{-1}(a)_{\kappa\lambda}\otimes c_1^{\ \lambda}\otimes c_2^{\ \kappa},  \end{equation}
 \begin{equation}\label{hom-entwining-cond-3}1_{\kappa}\otimes c^{\kappa}=1\otimes c,\end{equation}
 \begin{equation}\label{hom-entwining-cond-4}a_{\kappa}\varepsilon(c^{\kappa})=a\varepsilon(c),\end{equation}
 where we have used the notation $\psi(c\otimes a)=a_{\kappa}\otimes c^{\kappa}$, $a\in A$, $c\in C$, for the so-called {\it entwining map} $\psi$. It is said that $(C,\gamma)$ and $(A,\alpha)$ are {\it entwined} by $\psi$. $\psi$ $\in$ $\widetilde{\mathcal{H}}(\mathcal{M}_k)$ means that the relation

 \begin{equation}\label{entwining-map-cond}\alpha(a)_{\kappa}\otimes \gamma(c)^{\kappa}=\alpha(a_{\kappa})\otimes \gamma(c^{\kappa})\end{equation} holds.

 \end{definition}

\begin{definition}\label{entwined_Hom_module}A $[(A,\alpha), (C,\gamma)]_{\psi}$-{\it entwined Hom-module} is an object $(M,\mu)$ $\in$ $\widetilde{\mathcal{H}}(\mathcal{M}_k)$ which is a right $(A,\alpha)$-Hom-module with action $\rho_M: M\otimes A\to M$, $m\otimes a\mapsto ma$ and a right $(C,\gamma)$-Hom-comodule with coaction $\rho^M:M\to M\otimes C$, $m\mapsto m_{(0)}\otimes m_{(1)}$ fulfilling the condition, for all $m \in M$, $a \in A$,

\begin{equation}\label{entwined-Hom-module-cond}\rho^{M}(ma)=m_{(0)}\alpha^{-1}(a)_{\kappa}\otimes \gamma(m_{(1)}^{\ \ \  \kappa}).\end{equation}

\end{definition}

By $\widetilde{\mathcal{M}}^C_A(\psi)$, we denote the category of $[(A,\alpha), (C,\gamma)]_{\psi}$-entwined Hom-modules together with the morphisms in which are both right $(A,\alpha)$-linear and right $(C,\gamma)$-colinear.

With the following theorem, we construct a Hom-coring associated to an entwining Hom-structure and show an identification of entwined Hom-modules with Hom-comodules of this Hom-coring, pursuing the Proposition 2.2 in \cite{Brzezinski2}.

\begin{theorem}\label{Hom-Coring-Assoc-Hom-Entw-Strc}Let $(A,\alpha)$ be a monoidal Hom-algebra and $(C,\gamma)$ be a monoidal Hom-coalgebra.
\begin{enumerate}
\item  For a Hom-entwining structure $[(A,\alpha), (C,\gamma)]_{\psi}$, $(A\otimes C,\alpha\otimes\gamma)$ is an $(A,\alpha)$-Hom-bimodule with a left Hom-module structure $a(a'\otimes c)=\alpha^{-1}(a)a'\otimes \gamma(c)$ and a right Hom-module structure $(a'\otimes c)a=a'\alpha^{-1}(a)_{\kappa}\otimes \gamma(c^{\kappa})$, for all $a, a' \in A$, $c \in C$. Furthermore, $(\mathcal{C},\chi)=(A\otimes C, \alpha\otimes\gamma)$ is an $(A,\alpha)$-Hom-coring with the comultiplication and counit
    \begin{equation}\label{comult-of-associated-Hom-coring}\Delta_{\mathcal{C}}:\mathcal{C}\to \mathcal{C}\otimes_A\mathcal{C},\ a\otimes c \mapsto (\alpha^{-1}(a)\otimes c_1)\otimes_A(1\otimes c_2),\end{equation}
    \begin{equation}\label{counit-of-associated-Hom-coring}\varepsilon_{\mathcal{C}}:\mathcal{C}\to A,\ a\otimes c \mapsto \alpha(a)\varepsilon(c).\end{equation}
\item If $\mathcal{C}=(A\otimes C,\alpha\otimes\gamma)$ is an $(A,\alpha)$-Hom-coring with the comultiplication and counit given above, then $[(A,\alpha), (C,\gamma)]_{\psi}$ is a Hom-entwining structure, where
    $$\psi:C\otimes A\to A\otimes C, \ c\otimes a\mapsto (1\otimes \gamma^{-1}(c))a.$$
\item Let $(\mathcal{C},\chi)=(A\otimes C,\alpha\otimes\gamma)$ be the $(A,\alpha)$-Hom-coring associated to $[(A,\alpha), (C,\gamma)]_{\psi}$ as in (1). Then the category of $[(A,\alpha), (C,\gamma)]_{\psi}$-entwined Hom-modules is isomorphic to the category of right $(\mathcal{C},\chi)$-Hom-comodules.
\end{enumerate}
\end{theorem}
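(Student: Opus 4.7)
\emph{Plan.} The theorem has three parts, and they are essentially a Hom-level unpacking of Brzeziński's classical correspondence \cite{Brzezinski2} between entwining structures and corings of the form $A\otimes C$. Parts (1) and (2) are converse to each other, and the content reduces to matching, one-by-one, each of the entwining axioms (\ref{hom-entwining-cond-1})--(\ref{hom-entwining-cond-4}) to a Hom-coring axiom; part (3) then follows from the Hom-analogue of the isomorphism $M\otimes_{A}(A\otimes C)\cong M\otimes C$.

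For part (1), I would first verify that the proposed left and right actions make $(A\otimes C,\alpha\otimes\gamma)$ into an $(A,\alpha)$-Hom-bimodule. The left Hom-module axioms (\ref{Hom-action-cond-1}) follow directly from the Hom-algebra axioms (\ref{monoidal-Hom-alg-cond-1}). Associativity of the right action $(x\cdot a)\cdot \alpha(a')=\mu(x)\cdot(aa')$ unpacks into precisely (\ref{hom-entwining-cond-1}), while its unitality is (\ref{hom-entwining-cond-3}); the bimodule compatibility holds since the $\psi$-twist touches only the right factor. Next I would check that $\Delta_{\mathcal{C}}$ lands in $\mathcal{C}\otimes_{A}\mathcal{C}$ and is $(A,\alpha)$-bilinear: left linearity is immediate from the formula, and right $(A,\alpha)$-linearity is exactly the hexagon condition (\ref{hom-entwining-cond-2}). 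Hom-coassociativity (\ref{Hom-coring-cond-1}) and multiplicativity with respect to $\alpha\otimes\gamma$ reduce to the corresponding properties of $(C,\gamma)$, using (\ref{entwining-map-cond}) to push $\alpha\otimes\gamma$ past $\psi$ when needed. Finally, the counit $\varepsilon_{\mathcal{C}}$ is left $(A,\alpha)$-linear by (\ref{monoidal-Hom-alg-cond-1}), right $(A,\alpha)$-linear by (\ref{hom-entwining-cond-4}), and its counity follows from the counit axiom of $(C,\gamma)$ together with (\ref{Hom-action-cond-1}).

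For part (2), I would reverse the argument: setting $\psi(c\otimes a):=(1_{A}\otimes\gamma^{-1}(c))a$ and expanding each Hom-coring axiom, the associativity of the right $A$-action delivers (\ref{hom-entwining-cond-1}), its unitality delivers (\ref{hom-entwining-cond-3}), right $(A,\alpha)$-linearity of $\Delta_{\mathcal{C}}$ delivers (\ref{hom-entwining-cond-2}), right $(A,\alpha)$-linearity of $\varepsilon_{\mathcal{C}}$ delivers (\ref{hom-entwining-cond-4}), and the fact that the right action lies in $\widetilde{\mathcal{H}}(\mathcal{M}_{k})$ supplies (\ref{entwining-map-cond}).

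Part (3) is the categorical identification. Given an entwined Hom-module $(M,\mu)$ with $C$-coaction $m\mapsto m_{(0)}\otimes m_{(1)}$, I would define a $\mathcal{C}$-coaction $\varrho\colon M\to M\otimes_{A}\mathcal{C}$ by $\varrho(m):=m_{(0)}\otimes_{A}(1_{A}\otimes m_{(1)})$, with the precise $\mu^{\pm1}$/$\gamma^{\pm1}$ normalisation forced by compatibility with the tensor relation (\ref{tensor-prod-over-Hom-alg}). Right $(A,\alpha)$-linearity of $\varrho$, once one unfolds the right action on $\mathcal{C}$ from part (1), is exactly the entwining-compatibility (\ref{entwined-Hom-module-cond}). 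The Hom-coassociativity and counity of $\varrho$ relative to the $\Delta_{\mathcal{C}},\varepsilon_{\mathcal{C}}$ of (\ref{comult-of-associated-Hom-coring})--(\ref{counit-of-associated-Hom-coring}) follow from the corresponding axioms of $\rho^{M}$ over $(C,\gamma)$. An inverse functor sends a $\mathcal{C}$-Hom-comodule with coaction $m\mapsto m_{[0]}\otimes_{A}(a_{[1]}\otimes c_{[1]})$ to the $(C,\gamma)$-comodule with coaction $m\mapsto m_{[0]}a_{[1]}\otimes c_{[1]}$, using the Hom-version of the canonical isomorphism $M\otimes_{A}(A\otimes C)\cong M\otimes C$; morphisms are preserved automatically. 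The main source of difficulty throughout will not be conceptual but bookkeeping: the $\alpha^{\pm1}$ and $\gamma^{\pm1}$ twists must be placed on exactly the right tensor factor, and every passage through $\otimes_{A}$ must be reconciled with the defining relation (\ref{tensor-prod-over-Hom-alg}). In particular, verifying right $(A,\alpha)$-linearity of $\Delta_{\mathcal{C}}$ in part (1) and of $\varrho$ in part (3) is where a naive computation is most likely to mislabel a twist, so those are the steps I would write out in full.
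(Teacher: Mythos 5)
Your proposal follows essentially the same route as the paper: the same axiom-by-axiom matching in parts (1) and (2) (right-action associativity $\leftrightarrow$ (\ref{hom-entwining-cond-1}), unitality $\leftrightarrow$ (\ref{hom-entwining-cond-3}), right $(A,\alpha)$-linearity of $\Delta_{\mathcal{C}}$ $\leftrightarrow$ (\ref{hom-entwining-cond-2}), right linearity of $\varepsilon_{\mathcal{C}}$ $\leftrightarrow$ (\ref{hom-entwining-cond-4}), membership of the action in $\widetilde{\mathcal{H}}(\mathcal{M}_k)$ $\leftrightarrow$ (\ref{entwining-map-cond})), and the same use in part (3) of the canonical identification $M\otimes_A(A\otimes C)\cong M\otimes C$ with the observation that the entwined-module condition is precisely right $(A,\alpha)$-linearity of the coaction. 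Although you leave the twist bookkeeping (e.g.\ the coaction being $m\mapsto m_{(0)}\otimes_A(1\otimes\gamma^{-1}(m_{(1)}))$) to be filled in, the plan is correct and coincides with the paper's proof.
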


\begin{proof}
\begin{enumerate}
\item We first show that the right Hom-action of $(A,\alpha)$ on $(A\otimes C,\alpha\otimes\gamma)$ is Hom-associative and Hom-unital, for all $a,d,e \in A$ and $c\in C$:

\begin{eqnarray*}(\alpha(a)\otimes\gamma(c))(de)&=&\alpha(a)\alpha^{-1}(de)_{\kappa}\otimes\gamma(\gamma(c)^{\kappa})\\
&=&\alpha(a)(\alpha^{-1}(d)\alpha^{-1}(e))_{\kappa}\otimes\gamma(\gamma(c)^{\kappa})\\
&\overset{(\ref{hom-entwining-cond-1})}{=}&\alpha(a)(\alpha^{-1}(d)_{\kappa}\alpha^{-1}(e)_{\lambda})\otimes\gamma^{2}(c^{\kappa\lambda})\\
&=&(a\alpha^{-1}(d)_{\kappa})\alpha(\alpha^{-1}(e)_{\lambda})\otimes\gamma(\gamma(c^{\kappa\lambda}))\\
&\overset{(\ref{entwining-map-cond})}{=}&(a\alpha^{-1}(d)_{\kappa})\alpha(\alpha^{-1}(e))_{\lambda}\otimes\gamma(\gamma(c^{\kappa})^{\lambda})\\
&=&(a\alpha^{-1}(d)_{\kappa}\otimes \gamma(c^{\kappa}))\alpha(e)\\
&=&((a\otimes c)d)\alpha(e),
\end{eqnarray*}

\begin{eqnarray*}(a\otimes c)1&=&a\alpha^{-1}(1)_{\kappa}\otimes\gamma(c^{\kappa})=a1_{\kappa}\otimes\gamma(c^{\kappa})\\
&=&\alpha^{-1}(\alpha(a))1_{\kappa}\otimes \gamma(c^{\kappa})=\alpha(a)(1_{\kappa}\otimes c^{\kappa})\\
&\overset{(\ref{hom-entwining-cond-3})}=&\alpha(a)(1\otimes c)=a1\otimes \gamma(c)\\
&=&(\alpha\otimes\gamma)(a\otimes c).
\end{eqnarray*}
One can also show that the left Hom-action, too, satisfies the Hom-associativity and Hom-unity. For any $a,b,d \in A$ and $c\in C$,

\begin{eqnarray*}(b(a\otimes c))\alpha(d)&=&(\alpha^{-1}(b)a\otimes \gamma(c))\alpha(d)=(\alpha^{-1}(b)a)\alpha^{-1}(\alpha(d))_{\kappa}\otimes\gamma(\gamma(c)^{\kappa})\\
&=&(\alpha^{-1}(b)a)\alpha(\alpha^{-1}(d))_{\kappa}\otimes \gamma(\gamma(c)^{\kappa})\overset{(\ref{entwining-map-cond})}{=}(\alpha^{-1}(b)a)\alpha(\alpha^{-1}(d)_{\kappa})\otimes \gamma^{2}(c^{\kappa})\\
&=&b(a\alpha^{-1}(d)_{\kappa})\otimes \gamma^{2}(c^{\kappa})=\alpha^{-1}(\alpha(b))(a\alpha^{-1}(d)_{\kappa})\otimes \gamma(\gamma(c^{\kappa}))\\
&=&\alpha(b)(a\alpha^{-1}(d)_{\kappa}\otimes \gamma(c^{\kappa}))=\alpha(b)((a\otimes c)d),
\end{eqnarray*}
proves the compatibility condition between left and right $(A,\alpha)$-Hom-actions.

First, it can easily be proven that the morphisms $A\otimes(\cC\otimes_A\cC)\to \cC\otimes_A\cC$,
\begin{equation}\label{left-Hom-action-on-tens-prod-of-corings} a\otimes((a'\otimes c)\otimes_A(a''\otimes c'))\mapsto\alpha^{-1}(a)(a'\otimes c)\otimes_A(\alpha(a'')\otimes \gamma(c'))\end{equation}
and $(\cC\otimes_A\cC)\otimes A\to \cC\otimes_A\cC,$
\begin{equation}\label{right-Hom-action-on-tens-prod-of-corings} ((a'\otimes c)\otimes_A(a''\otimes c'))a\mapsto(\alpha(a')\otimes \gamma(c))\otimes_A(a''\otimes c')\alpha^{-1}(a)\end{equation}
define a left Hom-action and a right Hom-action of $(A,\alpha)$ on $(\cC\otimes_A\cC,\chi\otimes\chi)$, respectively. Next it is shown that the comultiplication $\Delta_{\mathcal{C}}$ is $(A,\alpha)$-bilinear, that is, $\Delta_{\mathcal{C}}$ preserves the left and right $(A,\alpha)$-Hom-actions and the compatibility condition between them as follows: Let $a,a',b,d \in A$ and $c \in C$, then we have the following computations
\begin{eqnarray*}\Delta_{\mathcal{C}}(a(a'\otimes c))&=&(\alpha^{-1}(\alpha^{-1}(a)a')\otimes \gamma(c)_1)\otimes_A(1\otimes \gamma(c)_2)\\
&\overset{(\ref{Hom-coring-cond-2})}{=}&(\alpha^{-2}(a)\alpha^{-1}(a')\otimes \gamma(c_1))\otimes_A(1\otimes \gamma(c_2))\\
&=&\alpha^{-1}(a)(\alpha^{-1}(a')\otimes c_1)\otimes_A (\alpha(1)\otimes \gamma(c_2))\\
&\overset{(\ref{left-Hom-action-on-tens-prod-of-corings})}{=}&a((\alpha^{-1}(a')\otimes c_1)\otimes_A (1\otimes c_2))
=a\Delta_{\mathcal{C}}(a'\otimes c),
\end{eqnarray*}

\begin{eqnarray*}\Delta_{\mathcal{C}}((a'\otimes c)a)&=&\Delta_{\mathcal{C}}(a'\alpha^{-1}(a)_{\kappa}\otimes \gamma(c^{\kappa}))\\
&=&(\alpha^{-1}(a'\alpha^{-1}(a)_{\kappa})\otimes \gamma(c^{\kappa})_1)\otimes_A (1\otimes \gamma(c^{\kappa})_2)\\
&\overset{(\ref{Hom-coring-cond-2})}{=}&(\alpha^{-1}(a')\alpha^{-1}(\alpha^{-1}(a)_{\kappa})\otimes\gamma(c^{\kappa}_{\ 1}))\otimes_A(1\otimes \gamma(c^{\kappa}_{\ 2}))\\
&\overset{(\ref{hom-entwining-cond-2})}{=}&(\alpha^{-1}(a')\alpha^{-2}(a)_{\kappa\lambda}\otimes \gamma(c_1^{\ \lambda}))\otimes_A(1\otimes \gamma(c_2^{\ \kappa}))\\
&=&(\alpha^{-1}(a')\otimes c_1)\alpha(\alpha^{-2}(a)_{\kappa})\otimes_A(1\otimes \gamma(c_2^{\ \kappa}))\\
&\overset{(\ref{tensor-prod-over-Hom-alg})}{=}&(a'\otimes \gamma(c_1))\otimes_A \alpha(\alpha^{-2}(a)_{\kappa})(1\otimes c_2^{\ \kappa})\\
&=&(a'\otimes \gamma(c_1))\otimes_A (\alpha(\alpha^{-2}(a)_{\kappa})\otimes \gamma(c_2^{\ \kappa}))\\
&=&(a'\otimes \gamma(c_1))\otimes_A (1\alpha^{-1}(\alpha^{-1}(a))_{\kappa}\otimes \gamma(c_2^{\ \kappa}))\\
&=&(a'\otimes \gamma(c_1))\otimes_A (1\otimes c_2)\alpha^{-1}(a)\\
&\overset{(\ref{right-Hom-action-on-tens-prod-of-corings})}{=}&((\alpha^{-1}(a')\otimes c_1)\otimes_A(1\otimes c_2))a
=\Delta_{\mathcal{C}}(a'\otimes c)a,
\end{eqnarray*}

\begin{eqnarray*}\alpha(b)(\Delta_{\mathcal{C}}(a\otimes c)d)&=&\alpha(b)(((\alpha^{-1}(a)\otimes c_1)\otimes_A(1 \otimes c_2))d)\\
&=&\alpha(b)((a\otimes \gamma(c_1))\otimes_A(1\otimes c_2)\alpha^{-1}(d))\\
&\overset{(\ref{right-Hom-action-on-tens-prod-of-corings})}{=}&\alpha(b)((a\otimes \gamma(c_1))\otimes_A (1\alpha^{-1}(\alpha^{-1}(d))_{\kappa}\otimes \gamma(c_2^{\ \kappa})))\\
&=&\alpha(b)((a\otimes \gamma(c_1))\otimes_A (\alpha(\alpha^{-2}(d)_{\kappa})\otimes \gamma(c_2^{\ \kappa})))\\
&\overset{(\ref{left-Hom-action-on-tens-prod-of-corings})}{=}&b(a\otimes \gamma(c_1))\otimes_A (\alpha^{2}(\alpha^{-2}(d)_{\kappa})\otimes \gamma^{2}(c_2^{\ \kappa}))\\
&=&(\alpha^{-1}(b)a\otimes \gamma^{2}(c_1))\otimes_A \alpha^{2}(\alpha^{-2}(d)_{\kappa})(1\otimes \gamma(c_2^{\ \kappa}))\\
&\overset{(\ref{tensor-prod-over-Hom-alg})}{=}&(\alpha^{-1}(\alpha^{-1}(b)a)\otimes \gamma(c_1))\alpha^{2}(\alpha^{-2}(d)_{\kappa}) \otimes_A(1\otimes \gamma^{2}(c_2^{\ \kappa}))\\
&=&((\alpha^{-2}(b)\alpha^{-1}(a))\alpha(\alpha^{-2}(d)_{\kappa})_{\lambda}\otimes \gamma(\gamma(c_1)^{\lambda}))\otimes_A(1\otimes \gamma^{2}(c_2^{\ \kappa}))\\
&\overset{(\ref{entwining-map-cond})}{=}&((\alpha^{-2}(b)\alpha^{-1}(a))\alpha(\alpha^{-2}(d)_{\kappa\lambda})\otimes \gamma(\gamma(c_1^{\ \lambda})))\otimes_A(1\otimes \gamma^{2}(c_2^{\ \kappa}))\\
&=&(\alpha^{-1}(b)(\alpha^{-1}(a)\alpha^{-2}(d)_{\kappa\lambda})\otimes \gamma^{2}(c_1^{\ \lambda}))\otimes_A(1\otimes \gamma^{2}(c_2^{\ \kappa}))\\
&=&(\alpha^{-1}(b)(\alpha^{-1}(a)\alpha^{-1}(\alpha^{-1}(d))_{\kappa\lambda})\otimes \gamma^{2}(c_1^{\ \lambda}))\otimes_A(1\otimes \gamma^{2}(c_2^{\ \kappa}))\\
&\overset{(\ref{hom-entwining-cond-2})}{=}&(\alpha^{-1}(b)(\alpha^{-1}(a)\alpha^{-1}(\alpha^{-1}(d)_{\kappa}))\otimes \gamma^{2}(c_{\ 1}^{\kappa}))\otimes_A (1\otimes \gamma^{2}(c_{\ 2}^{\kappa}))\\
&=&((\alpha^{-2}(b)\alpha^{-1}(a))\alpha^{-1}(d)_{\kappa}\otimes\gamma^{2}(c_{\ 1}^{\kappa}))\otimes_A (1\otimes \gamma^{2}(c_{\ 2}^{\kappa}))\\
&\overset{(\ref{Hom-coring-cond-2})}{=}&(\alpha^{-1}((\alpha^{-1}(b)a)\alpha(\alpha^{-1}(d)_{\kappa}))\otimes \gamma^{2}(c^{\kappa})_1)\otimes_A (1\otimes \gamma^{2}(c^{\kappa})_2)\\
&=&\Delta_{\mathcal{C}}((\alpha^{-1}(b)a)\alpha(\alpha^{-1}(d)_{\kappa})\otimes \gamma^{2}(c^{\kappa}))
=\Delta_{\mathcal{C}}(b(a\otimes c))\alpha(d).
\end{eqnarray*}

One easily checks that the counit $\varepsilon_{\mathcal{C}}$ is both left and right $(A,\alpha)$-linear. For any $a,b,d \in A$ and $c \in C$ we have
\begin{eqnarray*}\varepsilon_{\mathcal{C}}((b(a\otimes c))\alpha(d))&=&\varepsilon_{\mathcal{C}}(b(a\alpha^{-1}(d)_{\kappa})\otimes \gamma^{2}(c^{\kappa}))\\
&=&\alpha(b(a\alpha^{-1}(d)_{\kappa}))\varepsilon(\gamma^{2}(c^{\kappa}))\\
&\overset{(\ref{Hom-coring-cond-2})}{=}&\alpha(b)(\alpha(a)\alpha(\alpha^{-1}(d)_{\kappa}))\varepsilon(c^{\kappa})\\
&=&\alpha(b)(\alpha(a)\alpha(\alpha^{-1}(d)_{\kappa}\varepsilon(c^{\kappa})))\\
&\overset{(\ref{hom-entwining-cond-4})}{=}&\alpha(b)(\alpha(a)\alpha(\alpha^{-1}(d)\varepsilon(c)))\\
&=&\alpha(b)(\alpha(a)\varepsilon(c)d)=\alpha(b)(\varepsilon_{\mathcal{C}}(a\otimes c)d).
\end{eqnarray*}
This finishes the proof that $\varepsilon_{\mathcal{C}}$ is $(A,\alpha)$-bilinear.
Let us put
$$\Delta_{\mathcal{C}}(a\otimes c)=(a\otimes c)_1\otimes_A(a\otimes c)_2=(\alpha^{-1}(a)\otimes c_1)\otimes_A(1\otimes c_2).$$

Then we get the following
\begin{eqnarray*}\lefteqn{(\alpha^{-1}\otimes\gamma^{-1})((a\otimes c)_1)\otimes_A\Delta_{\mathcal{C}}((a\otimes c)_2)}\hspace{8em}\\
&=&(\alpha^{-2}(a)\otimes\gamma^{-1}(c_1))\otimes_A((1\otimes c_{21})\otimes_A(1\otimes c_{22}))\\
&=&(\alpha^{-2}(a)\otimes c_{11})\otimes_A((1\otimes c_{12})\otimes_A(1\otimes \gamma^{-1}(c_2)))\\
&=&(\alpha^{-1}(a)\otimes c_{1})_1\otimes_A((\alpha^{-1}(a)\otimes c_{1})_2\otimes_A(1\otimes \gamma^{-1}(c_2)))\\
&=&(a\otimes c)_{11}\otimes_A((a\otimes c)_{12}\otimes_A(\alpha^{-1}\otimes\gamma^{-1})((a\otimes c)_2)),
\end{eqnarray*}
where in the second step the Hom-coassociativity of $(C,\gamma)$ is used.
\begin{eqnarray*}\varepsilon_{\mathcal{C}}((a\otimes c)_1)(a\otimes c)_2&=&\varepsilon_{\mathcal{C}}((\alpha^{-1}(a)\otimes c_1))(1\otimes c_2)\\
&=&\alpha(\alpha^{-1}(a)\varepsilon(c_1))(1\otimes c_2)=a(1\otimes \varepsilon(c_1)c_2)\\
&=&a(1\otimes \gamma^{-1}(c))=a\otimes c,
\end{eqnarray*}
on the other hand we have
\begin{eqnarray*}(a\otimes c)_1\varepsilon_{\mathcal{C}}((a\otimes c)_2)&=&(\alpha^{-1}(a)\otimes c_1)\alpha(1)\varepsilon(c_2)\\
&=&(\alpha^{-1}(a)\otimes c_1\varepsilon(c_2))1=(\alpha^{-1}(a)\otimes \gamma^{-1}(c))1=a\otimes c.
\end{eqnarray*}

We also show that the following relations
\begin{eqnarray*}\Delta_{\mathcal{C}}(\alpha(a)\otimes\gamma(c))&=&(\alpha^{-1}(\alpha(a))\otimes \gamma(c)_1)\otimes_A(1\otimes \gamma(c)_2)\\
&=&(\alpha(\alpha^{-1}(a))\otimes \gamma(c_1))\otimes_A(\alpha(1)\otimes \gamma(c_2)\\
&=&((\alpha\otimes\gamma)\otimes(\alpha\otimes\gamma))(\Delta_{\mathcal{C}}(a\otimes c)),
\end{eqnarray*}
\begin{equation*}\varepsilon_{\mathcal{C}}(\alpha(a)\otimes\gamma(c))=\alpha(\alpha(a))\varepsilon(\gamma(c))
=\alpha(\alpha(a))\varepsilon(c)
=\alpha(\varepsilon_{\mathcal{C}}(a\otimes c))
\end{equation*}
hold, which completes the proof that $(A\otimes C, \alpha\otimes\gamma)$ is an $(A,\alpha)$-Hom-coring.

\item Let us denote $\psi(c\otimes a)=(1\otimes \gamma^{-1}(c))a=a_{\kappa}\otimes c^{\kappa}$. $\psi$ is in $\widetilde{\mathcal{H}}(\mathcal{M}_k)$:
\begin{eqnarray*}(\alpha\otimes\gamma)(\psi(c\otimes a))&=&\alpha(a_{\kappa})\otimes\gamma(c^{\kappa})=(\alpha\otimes\gamma)((1\otimes \gamma^{-1}(c))a)\\
&=&(\alpha(1)\otimes\gamma(\gamma^{-1}(c)))\alpha(a)=(1\otimes c)\alpha(a)\\
&=&(1\otimes \gamma^{-1}(\gamma(c)))\alpha(a)=\alpha(a)_{\kappa}\otimes\gamma(c)^{\kappa}\\
&=&\psi(\gamma(c)\otimes\alpha(a)),
\end{eqnarray*}
where in the third equality the fact that the right Hom-action of  $(A,\alpha)$ on $(A\otimes C,\alpha\otimes\gamma)$ is a morphism in $\widetilde{\mathcal{H}}(\mathcal{M}_k)$ was used. Now, let $a,a'\in A$ and $c\in C$, then
\begin{eqnarray*}\psi(c\otimes aa')&=&(aa')_{\kappa}\otimes c^{\kappa}=(1\otimes \gamma^{-1}(c))(aa')\\
&=&((\alpha^{-1}(1)\otimes \gamma^{-1}(\gamma^{-1}(c)))a)\alpha(a')=((1\otimes \gamma^{-1}(\gamma^{-1}(c)))a)\alpha(a')\\
&=&(a_{\kappa}\otimes\gamma^{-1}(c)^{\kappa})\alpha(a')=(\alpha^{-1}(a_{\kappa})1\otimes \gamma(\gamma^{-1}(\gamma^{-1}(c)^{\kappa})))\alpha(a')\\
&=&(a_{\kappa}(1\otimes\gamma^{-1}(\gamma^{-1}(c)^{\kappa})))\alpha(a')\\
&=&\alpha(a_{\kappa})((1\otimes\gamma^{-1}(\gamma^{-1}(c)^{\kappa}))a')=\alpha(a_{\kappa})\psi(\gamma^{-1}(c)^{\kappa}\otimes a')\\
&=&\alpha(a_{\kappa})(a'_{\lambda}\otimes \gamma^{-1}(c)^{\kappa\lambda})=\alpha^{-1}(\alpha(a_{\kappa}))a'_{\lambda}\otimes \gamma(\gamma^{-1}(c)^{\kappa\lambda})\\
&=&a_{\kappa}a'_{\lambda}\otimes \gamma(\gamma^{-1}(c)^{\kappa\lambda}).
\end{eqnarray*}
In the above equality, if we replace $c$ by $\gamma(c)$ we obtain $(aa')_{\kappa}\otimes \gamma(c)^{\kappa}=a_{\kappa}a'_{\lambda}\otimes \gamma(c^{\kappa\lambda})$. Next, by using the right $(A,\alpha)$-linearity of $\Delta_{\mathcal{C}}$ we prove the following
\begin{eqnarray*}\alpha^{-1}(a)_{\kappa\lambda}\otimes c_1^{\ \lambda}\otimes c_2^{\ \kappa}&=&\psi(c_1\otimes \alpha^{-1}(a)_{\kappa})\otimes c_2^{\ \kappa}\\
&=&(1\otimes \gamma^{-1}(c_1))\alpha^{-1}(a)_{\kappa}\otimes c_2^{\ \kappa}\\
&=&(1\otimes \gamma^{-1}(c_1))\alpha^{-1}(a)_{\kappa}\otimes_A (1\otimes \gamma^{-1}(c_2^{\ \kappa}))\\
&\overset{(\ref{tensor-prod-over-Hom-alg})}{=}&(1\otimes c_1)\otimes_A \alpha^{-1}(a)_{\kappa}(1\otimes \gamma^{-2}(c_2^{\ \kappa}))\\
&=&(1\otimes c_1)\otimes_A (\alpha^{-1}(a)_{\kappa}\otimes \gamma^{-1}(c_2^{\ \kappa}))\\
&=&(id_{A\otimes C}\otimes id_A\otimes\gamma^{-1})((1\otimes c_1)\otimes_A \psi(c_2\otimes\alpha^{-1}(a)))\\
&=&(id_{A\otimes C}\otimes id_A\otimes\gamma^{-1})((1\otimes c_1)\otimes_A((1\otimes\gamma^{-1}(c_2))\alpha^{-1}(a)))\\
&\overset{(\ref{right-Hom-action-on-tens-prod-of-corings})}{=}&(id_{A\otimes C}\otimes id_A\otimes\gamma^{-1})(((\alpha^{-1}(1)\otimes \gamma^{-1}(c_1))\otimes_A(1\otimes\gamma^{-1}(c_2)))a)\\
&\overset{(\ref{Hom-coring-cond-2})}{=}&(id_{A\otimes C}\otimes id_A\otimes\gamma^{-1})(((1\otimes \gamma^{-1}(c)_1)\otimes_A(1\otimes\gamma^{-1}(c)_2))a)\\
&=&(id_{A\otimes C}\otimes id_A\otimes\gamma^{-1})(\Delta_{\mathcal{C}}(1\otimes \gamma^{-1}(c))a)\\
&=&(id_{A\otimes C}\otimes id_A\otimes\gamma^{-1})(\Delta_{\mathcal{C}}((1\otimes \gamma^{-1}(c))a))\\
&=&(id_{A\otimes C}\otimes id_A\otimes\gamma^{-1})(\Delta_{\mathcal{C}}(a_{\kappa}\otimes c^{\kappa}))\\
&=&(id_{A\otimes C}\otimes id_A\otimes\gamma^{-1})((\alpha^{-1}(a_{\kappa})\otimes c^{\kappa}_{\ 1})\otimes_A (1\otimes c^{\kappa}_{\ 2}))\\
&=&(\alpha^{-1}(a_{\kappa})\otimes c^{\kappa}_{\ 1})\otimes_A (1\otimes \gamma^{-1}(c^{\kappa}_{\ 2}))\\
&=&(\alpha^{-1}(\alpha^{-1}(a_{\kappa}))\otimes\gamma^{-1}(c^{\kappa}_{\ 1}))1\otimes \gamma(\gamma^{-1}(c^{\kappa}_{\ 2}))
=\alpha^{-1}(a_{\kappa})\otimes c_{\ 1}^{\kappa}\otimes c^{\kappa}_{\ 2}.
\end{eqnarray*}
We also find $\psi(c\otimes 1)=1_{\kappa}\otimes c^{\kappa}=(1\otimes \gamma^{-1}(c))1=1\otimes c.$
Finally, the fact of $\varepsilon_{\mathcal{C}}$ being right $(A,\alpha)$-linear gives
\begin{eqnarray*}\alpha(a_{\kappa})\varepsilon(c^{\kappa})&=&\varepsilon_{\mathcal{C}}(a_{\kappa}\otimes c^{\kappa})=\varepsilon_{\mathcal{C}}((1\otimes \gamma^{-1}(c))a)\\
&=&\varepsilon_{\mathcal{C}}(1\otimes \gamma^{-1}(c))a=\alpha(1)\varepsilon(\gamma^{-1}(c))a=1a\varepsilon(c)=\alpha(a)\varepsilon(c),
\end{eqnarray*}
which means that $a_{\kappa}\varepsilon(c^{\kappa})=a\varepsilon(c)$. Therefore $[(A,\alpha), (C,\gamma)]_{\psi}$ is a Hom-entwining structure.
\item The essential point is that if $(M,\mu)$ is a right $(A,\alpha)$-Hom-module, then $(M\otimes C,\mu\otimes\gamma)$ is a right $(A,\alpha)$-Hom-module with the Hom-action $\rho_{M\otimes C}:(M\otimes C)\otimes A\to M\otimes C$, $(m\otimes c)\otimes a \mapsto (m\otimes c)a=m\alpha^{-1}(a)_{\kappa}\otimes \gamma(c^{\kappa})$. $\rho_{M\otimes C}$ indeed satisfies Hom-associativity and Hom-unity as follows. For all $m\in M$, $a,a'\in A$ and $c\in C$,
\begin{eqnarray*}(\mu(m)\otimes\gamma(c))(aa')&=&\mu(m)\alpha^{-1}(aa')_{\kappa}\otimes \gamma(\gamma(c)^{\kappa})\\
&\overset{(\ref{hom-entwining-cond-1})}{=}&\mu(m)(\alpha^{-1}(a)_{\kappa}\alpha^{-1}(a')_{\lambda})\otimes \gamma(\gamma(c^{\kappa\lambda}))\\
&=&(m\alpha^{-1}(a)_{\kappa})\alpha(\alpha^{-1}(a')_{\lambda})\otimes \gamma(\gamma(c^{\kappa\lambda}))\\
&\overset{(\ref{entwining-map-cond})}{=}&(m\alpha^{-1}(a)_{\kappa})\alpha(\alpha^{-1}(a'))_{\lambda}\otimes \gamma(\gamma(c^{\kappa})^{\lambda})\\
&=&(m\alpha^{-1}(a)_{\kappa})\alpha^{-1}(\alpha(a'))_{\lambda}\otimes \gamma(\gamma(c^{\kappa})^{\lambda})\\
&=&(m\alpha^{-1}(a)_{\kappa}\otimes \gamma(c^{\kappa}))\alpha(a')=((m\otimes c)a)\alpha(a'),
\end{eqnarray*}
\begin{eqnarray*}(m\otimes c)1&=&m\alpha^{-1}(1)_{\kappa}\otimes \gamma(c^{\kappa})=m1_{\kappa}\otimes \gamma(c^{\kappa})
\overset{(\ref{hom-entwining-cond-3})}{=}m1\otimes \gamma(c)=\mu(m)\otimes \gamma(c).
\end{eqnarray*}
With respect to this Hom-action of $(A,\alpha)$ on $(M\otimes C,\mu\otimes\gamma)$, becoming an $[(A,\alpha), (C,\gamma)]_{\psi}$-entwined Hom-module is equivalent to the fact that the Hom-coaction of $(C,\gamma)$ on $(M,\mu)$ is right $(A,\alpha)$-linear.
Let $(M,\mu)\in \widetilde{\mathcal{M}}^{C}_A(\psi)$ with the right $(C,\gamma)$-Hom-comodule structure $m\mapsto m_{(0)}\otimes m_{(1)}$. Then $(M,\mu) \in \widetilde{\mathcal{M}}^{\mathcal{C}}$ with the Hom-coaction $\rho^{M}:M \to M\otimes_A \mathcal{C}$, $m\mapsto m_{(0)}\otimes_A(1\otimes \gamma^{-1}(m_{(1)}))$, which actually is
\begin{eqnarray*}\rho^{M}(m)&=&m_{(0)}\otimes_A(1\otimes \gamma^{-1}(m_{(1)}))=\mu^{-1}(m)1\otimes \gamma(\gamma^{-1}(m_{(1)}))\\
&=&m_{(0)}\otimes m_{(1)},
\end{eqnarray*}

where in the second equality we have used the canonical identification $$\phi:M\otimes_A(A\otimes C) \simeq M\otimes C,\ m\otimes_A(a\otimes c)\mapsto \mu^{-1}(m)a\otimes \gamma(c),$$ and $\rho^{M}$ is $(A,\alpha)$-linear since $$\rho^{M}(ma)=(ma)_{(0)}\otimes (ma)_{(1)}=m_{(0)}\alpha^{-1}(a)_{\kappa}\otimes\gamma(m_{(1)}^{\ \ \ \kappa})=(m_{(0)}\otimes m_{(1)})a.$$
Conversely, if $(M,\mu)$ is a right $(A\otimes C, \alpha\otimes\gamma)$-Hom-comodule with the coaction $\rho^{M}:M\to M\otimes_A(A\otimes C)$, by using the canonical identification above, one gets the $(C,\gamma)$-Hom-comodule structure $\bar{\rho}^{M}=\phi\circ\rho^{M}:M\to M\otimes C$ on $(M,\mu)$. One can also check that $\phi$ is right $(A,\alpha)$-linear once the following $(A,\alpha)$-Hom-module structure on $M\otimes_A\mathcal{C}$ is given:

$$\rho_{M\otimes_A \mathcal{C}}:(M\otimes_A\mathcal{C})\otimes A\to M\otimes_A\mathcal{C},\  (m\otimes_A(a\otimes c))\otimes a'\mapsto \mu(m)\otimes_A(a\otimes c)\alpha^{-1}(a'),$$

thus $\bar{\rho}^{M}$ is $(A,\alpha)$-linear since by definition $\rho^{M}$ is $(A,\alpha)$-linear. Therefore $(M,\mu)$ has an $[(A,\alpha), (C,\gamma)]_{\psi}$-entwined Hom-module structure.
\end{enumerate}
\end{proof}
One should refer to both \cite[Proposition 25]{CaenepeelMilitaruZhu} and \cite[Item 32.9]{BrzezinskiWisbauer} for the classical version of the following theorem.
\begin{theorem}\label{dual-algebra-of-Hom-coring-to-entw}Let $[(A,\alpha), (C,\gamma)]_{\psi}$be an entwining Hom-structure and $(\cC,\chi)=(A\otimes C,\alpha\otimes\gamma)$ be the associated $(A,\alpha)$-Hom-coring. Then the so-called {\it Koppinen smash} or $\psi$-twisted convolution algebra $Hom_{\psi}^{\mathcal{H}}(C,A)=(Hom^{\mathcal{H}}(C,A),*_{\psi},\eta_A\circ\varepsilon_C)$, where $(f*_{\psi}g)(c)=f(c_2)_{\kappa}g(c_1^{\ \kappa})$ for any $f,g \in Hom^{\mathcal{H}}(C,A)$, is anti-isomorphic to the algebra $(^{*}\cC,*^{l},\varepsilon_{\cC})$ in Proposition (\ref{dual-rings-of-Hom-coring}).
\end{theorem}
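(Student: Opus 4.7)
The plan is to exhibit an explicit anti-isomorphism $\Phi\colon Hom_{\psi}^{\mathcal{H}}(C,A)\to {}^{*}\mathcal{C}$ defined by the rule $\Phi(f)(a\otimes c)=af(c)$, and to read off the anti-algebra property directly from the comultiplication and the right $(A,\alpha)$-action on $(\mathcal{C},\chi)=(A\otimes C,\alpha\otimes\gamma)$ constructed in Theorem \ref{Hom-Coring-Assoc-Hom-Entw-Strc}.

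First I would check that $\Phi(f)\in {}^{*}\mathcal{C}$. The compatibility $\Phi(f)\circ\chi=\alpha\circ\Phi(f)$ follows from $f\circ\gamma=\alpha\circ f$ and the multiplicativity of $\alpha$, while left $(A,\alpha)$-linearity of $\Phi(f)$ reduces, via the left-action formula $b(a\otimes c)=\alpha^{-1}(b)a\otimes\gamma(c)$, to one application of Hom-associativity (\ref{monoidal-Hom-alg-cond-1}). For bijectivity I would introduce the candidate inverse $\Psi\colon {}^{*}\mathcal{C}\to Hom^{\mathcal{H}}(C,A)$ by $\Psi(F)(c)=F(1_{A}\otimes\gamma^{-1}(c))$; that $\Psi(F)\circ\gamma=\alpha\circ\Psi(F)$ is immediate from $F\circ\chi=\alpha\circ F$. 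One then computes $\Psi(\Phi(f))(c)=1_{A}f(\gamma^{-1}(c))=\alpha(f(\gamma^{-1}(c)))=f(c)$, and in the reverse direction the identity
\[
a(1_{A}\otimes\gamma^{-1}(c))=\alpha^{-1}(a)1_{A}\otimes c=a\otimes c
\]
together with the left $(A,\alpha)$-linearity of $F$ gives $\Phi(\Psi(F))(a\otimes c)=F(a\otimes c)$.

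The core of the argument is the anti-multiplicativity identity $\Phi(f*_{\psi}g)=\Phi(g)*^{l}\Phi(f)$. Unpacking the right-hand side at $a\otimes c$, using $(a\otimes c)_{1}=\alpha^{-1}(a)\otimes c_{1}$, $(a\otimes c)_{2}=1\otimes c_{2}$, and $\Phi(f)(1\otimes c_{2})=1_{A}f(c_{2})=\alpha(f(c_{2}))$, and then applying the right action $(a'\otimes c')b=a'\alpha^{-1}(b)_{\kappa}\otimes\gamma(c'^{\kappa})$ (which is where the entwining map $\psi$ enters), one arrives at
\[
(\Phi(g)*^{l}\Phi(f))(a\otimes c)=(\alpha^{-1}(a)f(c_{2})_{\kappa})\alpha(g(c_{1}^{\ \kappa})).
\]
A single use of Hom-associativity $(xy)\alpha(z)=\alpha(x)(yz)$ rewrites this as $a(f(c_{2})_{\kappa}g(c_{1}^{\ \kappa}))=a(f*_{\psi}g)(c)=\Phi(f*_{\psi}g)(a\otimes c)$. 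Unit preservation reduces to a one-line check: $\Phi(\eta_{A}\circ\varepsilon_{C})(a\otimes c)=a(1_{A}\varepsilon(c))=\alpha(a)\varepsilon(c)=\varepsilon_{\mathcal{C}}(a\otimes c)$.

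I expect the main technical obstacle to be merely the bookkeeping of the $\alpha$- and $\gamma$-shifts: each appeal to the Hom-unit law $1_{A}x=\alpha(x)$ or $f\circ\gamma=\alpha\circ f$ introduces a twist that must be absorbed against the $\alpha^{-1}$'s and $\gamma$'s already present in $\Delta_{\mathcal{C}}$ and in the right action, so that the $\kappa$-summation comes out in precisely the form $f(c_{2})_{\kappa}g(c_{1}^{\ \kappa})$ demanded by the $\psi$-twisted convolution. Once these shifts are lined up as above, the entwining axioms themselves enter only implicitly, through the well-definedness of the Hom-coring structure on $(A\otimes C,\alpha\otimes\gamma)$ already established in Theorem \ref{Hom-Coring-Assoc-Hom-Entw-Strc}.
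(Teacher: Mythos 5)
Your anti-isomorphism is exactly the paper's: your $\Phi$ is the paper's $\varphi(f)(a\otimes c)=af(c)$ and your $\Psi$ is its inverse $\phi(\xi)(c)=\xi(1\otimes\gamma^{-1}(c))$, and your anti-multiplicativity computation coincides step for step with the paper's evaluation of $(\xi*^{l}\xi')(a\otimes c)=a(f'*_{\psi}f)(c)$. The one structural difference is that the paper spends the first half of its proof verifying directly, from the entwining axioms (\ref{hom-entwining-cond-1})--(\ref{hom-entwining-cond-4}), that $*_{\psi}$ is associative with unit $\eta\varepsilon$, whereas you omit this entirely; since the theorem asserts that $Hom_{\psi}^{\mathcal{H}}(C,A)$ \emph{is} an algebra, you should at least remark that associativity and unitality of $*_{\psi}$ follow by transport of structure through your bijective, anti-multiplicative, unit-preserving $\Phi$ from the associativity of $({}^{*}\cC,*^{l},\varepsilon_{\cC})$ established in Proposition \ref{dual-rings-of-Hom-coring}. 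For that transport to close up you also need $f*_{\psi}g\in Hom^{\mathcal{H}}(C,A)$, i.e.\ $(f*_{\psi}g)\circ\gamma=\alpha\circ(f*_{\psi}g)$, which is a one-line consequence of $\Delta\circ\gamma=(\gamma\otimes\gamma)\circ\Delta$, (\ref{entwining-map-cond}) and $\alpha$ being multiplicative; with that noted, your route buys a genuinely shorter proof, replacing the paper's longest computation by a formal argument.
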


\begin{proof}For $f,g,h \in Hom^{\mathcal{H}}(C,A)$ and $c\in C$,

\begin{eqnarray*}\lefteqn{((f*_{\psi}g)*_{\psi} h)(c)}\hspace{2em}\\
&=&(f*_{\psi} g)(c_2)_{\kappa}h(c_1^{\ \kappa})=(f(c_{22})_{\lambda}g(c_{21}^{\ \ \lambda}))h(c_1^{\ \kappa})\\
&\overset{(\ref{hom-entwining-cond-1})}{=}&(f(c_{22})_{\lambda\kappa}g(c_{21}^{\ \ \lambda})_{\sigma})h(\gamma(\gamma^{-1}(c_1)^{\kappa\sigma}))=(f(c_{22})_{\lambda\kappa}g(c_{21}^{\ \ \lambda})_{\sigma})
\alpha(h(\gamma^{-1}(c_1)^{\kappa\sigma}))\\
&=&\alpha(f(c_{22})_{\lambda\kappa})(g(c_{21}^{\ \ \lambda})_{\sigma}h(\gamma^{-1}(c_1)^{\kappa\sigma}))\overset{\kappa\leftrightarrow\lambda}{=}\alpha(f(c_{22})_{\kappa\lambda})(g(c_{21}^{\ \ \kappa})_{\sigma}h(\gamma^{-1}(c_1)^{\lambda\sigma}))\\
&\overset{(\ref{Hom-coring-cond-1})}{=}&\alpha(f(\gamma^{-1}(c_{2}))_{\kappa\lambda})(g(c_{12}^{\ \ \kappa})_{\sigma}h(c_{11}^{\ \ \lambda\sigma}))
=\alpha(\alpha^{-1}(f(c_{2}))_{\kappa\lambda})(g(c_{12}^{\ \ \kappa})_{\sigma}h(c_{11}^{\ \ \lambda\sigma}))\\
&\overset{(\ref{hom-entwining-cond-2})}{=}&f(c_2)_{\kappa}(g(c_{1\ 2}^{\ \kappa})_{\sigma}h(c_{1\ 1}^{\ \kappa\ \sigma}))\\
&=&f(c_2)_{\kappa}(g*_{\psi} h)(c_1^{\ \kappa})\\
&=&(f*_{\psi}(g*_{\psi} h))(c),
\end{eqnarray*}
proving that $*_{\psi}$ is associative. Now we show that $\eta\varepsilon$ is the unit for $*_{\psi}$:

\begin{eqnarray*}(\eta\varepsilon*_{\psi} f)(c)&=&\eta\varepsilon(c_2)_{\kappa}f(c_1^{\ \kappa})=\varepsilon(c_2)1_{\kappa}f(c_1^{\ \kappa})\\
&=&1_{\kappa}f(\gamma^{-1}(c)^{\kappa})\overset{(\ref{hom-entwining-cond-3})}{=}1f(\gamma^{-1}(c))\\
&=&f(c)\\
&=&f(\gamma^{-1}(c))1=f(c_2)\varepsilon(c_1)1\\
&\overset{(\ref{hom-entwining-cond-4})}{=}&f(c_2)_{\kappa}\varepsilon(c_1^{\ \kappa})1=f(c_2)_{\kappa}\eta\varepsilon(c_1^{\ \kappa})\\
&=&(f*_{\psi }\eta\varepsilon)(c).
\end{eqnarray*}
The map $\phi:\: ^{*}{\cC}=\: _{A}Hom^{\mathcal{H}}(A\otimes C,A)\to Hom^{\mathcal{H}}(C,A)$ given by

\begin{equation}\phi(\xi)(c)= \xi(1\otimes \gamma^{-1}(c))\end{equation}

for any $\xi\in \: ^{*}{\cC} $ and $c\in C$, is a $k$-module isomorphism with the inverse $\varphi:Hom^{\mathcal{H}}(C,A) \to \: ^{*}{\cC} $ given by $\varphi(f)(a\otimes c)=af(c)$ for all $f\in Hom^{\mathcal{H}}(C,A)$ and $a\otimes c \in A\otimes C$: Let $a\in A$, $a'\otimes c \in A\otimes C$ and $f\in Hom^{\mathcal{H}}(C,A)$. Then

\begin{eqnarray*}\varphi(f)(a(a'\otimes c))&=&\varphi(f)(\alpha^{-1}(a)a'\otimes \gamma(c))=(\alpha^{-1}(a)a')f(\gamma(c))\\
&=&(\alpha^{-1}(a)a')\alpha(f(c))=a(a'f(c))=a\varphi(f)(a'\otimes c)
\end{eqnarray*}
and $$\varphi(f)(\alpha(a)\otimes \gamma(c))=\alpha(a)f(\gamma(c))=\alpha(af(c))=\alpha(\varphi(f)(a\otimes c)),$$ showing that $\varphi(f)$ is $(A,\alpha)$-linear. On the other hand,
$$\varphi(\phi(\xi))(a\otimes c)=a\phi(\xi)(c)=a\xi(1\otimes\gamma^{-1}(c))=\xi(a(1\otimes\gamma^{-1}(c)))=\xi(a\otimes c),$$
$$\phi(\varphi(f))(c)=\varphi(f)(1\otimes \gamma^{-1}(c))=1f(\gamma^{-1}(c))=f(c).$$
Now if we put $\phi(\xi)=f$ and $\phi(\xi')=f'$, we have $f(c)=\xi(1\otimes\gamma^{-1}(c))$, $f'(c)=\xi'(1\otimes\gamma^{-1}(c))$ for $c\in C$, and then

\begin{eqnarray*}(\xi*^{l} \xi')(a\otimes c)&=&\xi((a\otimes c)_1\xi'((a\otimes c)_2))\\
&\overset{(\ref{comult-of-associated-Hom-coring})}{=}&\xi((\alpha^{-1}(a)\otimes c_1)\xi'(1\otimes c_2))\\
&=&\xi((\alpha^{-1}(a)\otimes c_1)f'(\gamma(c_2)))=\xi((\alpha^{-1}(a)\otimes c_1)\alpha(f'(c_2))) \\
&=&\xi(\alpha^{-1}(a)\alpha^{-1}(\alpha(f'(c_2)))_{\kappa}\otimes \gamma(c_1^{\ \kappa}))=\xi(\alpha^{-1}(a)f'(c_2)_{\kappa}\otimes \gamma(c_1^{\ \kappa}))\\
&=&(\alpha^{-1}(a)f'(c_2)_{\kappa})f(\gamma(c_1^{\ \kappa}))=(\alpha^{-1}(a)f'(c_2)_{\kappa})\alpha(f(c_1^{\ \kappa}))\\
&=&a(f'(c_2)_{\kappa}f(c_1^{\ \kappa}))=a(f'*_{\psi f})(c),
\end{eqnarray*}
which induces the following

\begin{eqnarray*}\phi(\xi*^{l} \xi')(c)&=&(\xi*^{l} \xi')(1\otimes\gamma^{-1}(c))=1(f'*_{\psi} f)(\gamma^{-1}(c))\\
&=&\alpha((f'*_{\psi} f)(\gamma^{-1}(c)))=(f'*_{\psi} f)(\gamma(\gamma^{-1}(c)))\\
&=&(f'*_{\psi} f)(c)=(\phi(\xi')*_{\psi} \phi(\xi))(c).
\end{eqnarray*}
Moreover, $\phi(\varepsilon_{\cC})(c)=\varepsilon_{\cC}(1\otimes\gamma^{-1}(c))=\alpha(1)\varepsilon(\gamma^{-1}(c))=\eta\varepsilon(c).$
Therefore $\phi$ is the anti-isomorphism of the algebras $^{*}\cC$ and $Hom_{\psi}^{\mathcal{H}}(C,A)$.
\end{proof}
\section{Entwinings and Hom-Hopf-type Modules}
A bialgebra in $\widetilde{\mathcal{H}}(\mathcal{M}_k)$ is called a monoidal Hom-bialgebra (see \cite{CaenepeelGoyvaerts}), i.e. a monoidal Hom-bialgebra $(H,\alpha)$ is a sextuple $(H,\alpha,m,\eta,\Delta,\varepsilon)$ where $(H,\alpha,m,\eta)$ is a monoidal Hom-algebra and $(H,\alpha,\Delta,\varepsilon)$ is a monoidal Hom-coalgebra  such that
    \begin{equation}\label{monoidal-Hom-bialg-cond-1}\Delta(hh')=\Delta(h)\Delta(h')\:;\: \Delta(1_H)=1_H\otimes1_H,\end{equation}
    \begin{equation}\label{monoidal-Hom-bialg-cond-2}\varepsilon(hh')=\varepsilon(h)\varepsilon(h')\:;\: \varepsilon(1_H)=1,\end{equation}
    for any $h,h' \in H$.

\begin{definition}\cite{ChenZhang1} Let $(B,\beta)$ be a monoidal Hom-bialgebra. A {\it right} $(B,\beta)$-{\it Hom-comodule algebra} $(A,\alpha)$ is a monoidal Hom-algebra and a right $(B,\beta)$-Hom-comodule with a Hom-coaction $\rho^{A}:A\to A\otimes B,\: a\mapsto a_{(0)}\otimes a_{(1)}$ such that $\rho^{A}$ is a Hom-algebra morphism, i.e., for any $a,a' \in A$
\begin{equation}\label{Hom-comodule-algebra-cond}(aa')_{(0)}\otimes (aa')_{(1)}=a_{(0)}a'_{(0)}\otimes a_{(1)}{a'}_{(1)},\:\: \rho^{A}(1_A)=1_A\otimes 1_B,\end{equation}
$$\rho^{A}\circ \alpha=(\alpha\otimes\beta)\circ\rho^{A}.$$
\end{definition}

\begin{definition}Let $(B,\beta)$ be a monoidal Hom-bialgebra. A {\it right} $(B,\beta)$-{\it Hom-module coalgebra} $(C,\gamma)$ is a monoidal Hom-coalgebra and a right $(B,\beta)$-Hom-module with the Hom-action $\rho_{C}:C\otimes B\to C,\: c\otimes b \mapsto cb$ such that $\rho_{C}$ is a Hom-coalgebra morphism, that is, for any $c\in C$ and $b\in B$
\begin{equation}\label{Hom-module-coalgebra-cond}(cb)_1\otimes (cb)_2=c_1b_1\otimes c_2b_2,\: \varepsilon_{C}(cb)=\varepsilon_{C}(c)\varepsilon_{B}(b),\end{equation}
$$\rho_{C}\circ(\gamma\otimes\beta)=\gamma\circ\rho_{C}.$$
\end{definition}

By the following construction, we show that a Hom-Doi-Koppinen datum comes from a Hom-entwining structure and that the Doi-Koppinen Hom-Hopf modules are the same as the associated entwined Hom-modules, and give the structure of Hom-coring corresponding to the relevant Hom-entwining structure.

\begin{proposition}\label{Hom-coring-assoc-Doi-Koppinen-datum} Let $(B,\beta)$ be a monoidal Hom-bialgebra. Let $(A,\alpha)$ be a right $(B,\beta)$-Hom-comodule algebra with Hom-coaction $\rho^{A}:A\to A\otimes B,\: a\mapsto a_{(0)}\otimes a_{(1)}$ and $(C,\gamma)$ be a right $(B,\beta)$-Hom-module coalgebra with Hom-action $\rho_{C}:C\otimes B \to C,\: c\otimes b\mapsto cb$. Define the morphism
\begin{equation}\label{Doi-Koppinen-entwining}\psi:C\otimes A\to A\otimes C,\: c\otimes a\mapsto \alpha(a_{(0)})\otimes \gamma^{-1}(c)a_{(1)}=a_{\kappa}\otimes c^{\kappa}. \end{equation}
Then the following assertions hold.
\begin{enumerate}
\item $[(A,\alpha), (C,\gamma)]_{\psi}$ is an Hom-entwining structure.

\item $(M,\mu)$ is an $[(A,\alpha), (C,\gamma)]_{\psi}$-entwined Hom-module if and only if it is a right $(A,\alpha)$-Hom-module with $\rho_{M}:M\otimes A\to M,\: m\otimes a\mapsto ma$ and a right $(C,\gamma)$-Hom-comodule with $\rho^{M}:M\to M\otimes C,\: m\mapsto m_{(0)}\otimes m_{(1)}$ such that

\begin{equation}\label{Doi-Koppinen-Hom-module-cond}\rho^{M}(ma)=m_{(0)}a_{(0)}\otimes m_{(1)}a_{(1)} \end{equation}

for any $m\in M$ and $a\in A$.

\item $(\cC,\chi)=(A\otimes C, \alpha\otimes\gamma)$ is an $(A,\alpha)$-Hom-coring with comultiplication and counit given by (\ref{comult-of-associated-Hom-coring}) and (\ref{counit-of-associated-Hom-coring}), respectively, and it has the $(A,\alpha)$-Hom-bimodule structure $a(a'\otimes c)=\alpha^{-1}(a)a'\otimes \gamma(c)$, $(a'\otimes c)a=a'a_{(0)}\otimes ca_{(1)}$ for $a,a'\in A$ and $c\in C$.

\item $Hom^{\mathcal{H}}(C,A)$ is an associative algebra with the unit $\eta\varepsilon$ and the multiplication $*_{\psi}$ defined by

     \begin{equation}(f*_{\psi} g)(c)= \alpha(f(c_2)_{(0)})g(\gamma^{-1}(c_1)f(c_2)_{(1)})=\alpha(f(c_2))_{(0)}\alpha^{-1}(g(c_1\alpha(f(c_2)_{(1)}))),\end{equation}

     for all $f,g \in Hom^{\mathcal{H}}(C,A)$ and $c \in C$.
\end{enumerate}
\end{proposition}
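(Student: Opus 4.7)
The heart of the proof is part (1); parts (2), (3) and (4) then follow by specialising Theorems~\ref{Hom-Coring-Assoc-Hom-Entw-Strc} and~\ref{dual-algebra-of-Hom-coring-to-entw} to the explicit $\psi$ of (\ref{Doi-Koppinen-entwining}), so the remaining work is simply to rewrite the abstract structure maps in terms of the coaction $\rho^A$ and action $\rho_C$.

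For part (1), I would verify in turn the four Hom-entwining axioms (\ref{hom-entwining-cond-1})--(\ref{hom-entwining-cond-4}) together with the intertwining condition (\ref{entwining-map-cond}). The identity (\ref{hom-entwining-cond-1}) follows from the multiplicativity of $\rho^A$ (first half of (\ref{Hom-comodule-algebra-cond})) combined with the Hom-algebra axiom in $(A,\alpha)$. Condition (\ref{hom-entwining-cond-3}) reduces to $\rho^A(1_A)=1_A\otimes 1_B$, while (\ref{hom-entwining-cond-4}) uses $\varepsilon_C(cb)=\varepsilon_C(c)\varepsilon_B(b)$ together with counitality of $(B,\beta)$. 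The intertwining property (\ref{entwining-map-cond}) is a direct consequence of $\rho^A\circ\alpha=(\alpha\otimes\beta)\circ\rho^A$ and $\rho_C\circ(\gamma\otimes\beta)=\gamma\circ\rho_C$. The main technical obstacle is (\ref{hom-entwining-cond-2}): after expanding both sides using $\psi$, one side involves $(\gamma^{-1}(c)a_{(1)})_1\otimes(\gamma^{-1}(c)a_{(1)})_2$, which I would rewrite via the module coalgebra axiom $(cb)_1\otimes(cb)_2=c_1b_1\otimes c_2b_2$, while the other side requires Hom-coassociativity of $\rho^A$ inside $(B,\beta)$ together with the intertwining of $\rho^A$ with $\alpha$ and $\beta$, in order to bring the twists by $\alpha$, $\beta$, $\gamma$ into alignment.

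For part (2), I would substitute (\ref{Doi-Koppinen-entwining}) into (\ref{entwined-Hom-module-cond}). The key input is $\rho^A\circ\alpha=(\alpha\otimes\beta)\circ\rho^A$, which yields $\alpha^{-1}(a)_{(0)}=\alpha^{-1}(a_{(0)})$ and $\alpha^{-1}(a)_{(1)}=\beta^{-1}(a_{(1)})$. Applying this and then the Hom-action morphism property $\gamma(cb)=\gamma(c)\beta(b)$ on $(C,\gamma)$, the right-hand side of (\ref{entwined-Hom-module-cond}) collapses to $m_{(0)}a_{(0)}\otimes m_{(1)}a_{(1)}$. The converse direction reads the same computation backwards.

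For parts (3) and (4) the Hom-coring and algebra structures already exist by Theorems~\ref{Hom-Coring-Assoc-Hom-Entw-Strc}(1) and~\ref{dual-algebra-of-Hom-coring-to-entw}, so only the explicit formulas need be unfolded. For (3), substituting $\psi$ into the formula $(a'\otimes c)a=a'\alpha^{-1}(a)_{\kappa}\otimes\gamma(c^{\kappa})$ from Theorem~\ref{Hom-Coring-Assoc-Hom-Entw-Strc}, and again invoking the intertwining of $\rho^A$ with $\alpha,\beta$ together with $\gamma(cb)=\gamma(c)\beta(b)$, gives $(a'\otimes c)a=a'a_{(0)}\otimes ca_{(1)}$; the comultiplication, counit and left Hom-action are inherited verbatim. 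For (4), I would start from the Koppinen smash formula $(f*_\psi g)(c)=f(c_2)_{\kappa}g(c_1^{\kappa})$ of Theorem~\ref{dual-algebra-of-Hom-coring-to-entw}, compute $f(c_2)_{\kappa}\otimes c_1^{\kappa}=\psi(c_1\otimes f(c_2))=\alpha(f(c_2)_{(0)})\otimes\gamma^{-1}(c_1)f(c_2)_{(1)}$ to obtain the first expression, and then use $\rho^A\circ\alpha=(\alpha\otimes\beta)\circ\rho^A$ together with the intertwining $g\circ\gamma=\alpha\circ g$ to rewrite it in the second form. Associativity and the unit property are already supplied by Theorem~\ref{dual-algebra-of-Hom-coring-to-entw}, so no further verification is required.
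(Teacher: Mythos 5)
Your plan follows the paper's own proof essentially step for step: part (1) is verified axiom by axiom using exactly the inputs you name (multiplicativity and unitality of $\rho^A$, the module-coalgebra axiom, Hom-coassociativity and counitality of the coaction, and the intertwining relations with $\alpha,\beta,\gamma$), and parts (2)--(4) are obtained, as you propose, by specialising Theorems~\ref{Hom-Coring-Assoc-Hom-Entw-Strc} and~\ref{dual-algebra-of-Hom-coring-to-entw} and unfolding the explicit $\psi$. The approach is correct and coincides with the paper's.
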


\begin{proof}
\begin{enumerate}
\item By (\ref{Doi-Koppinen-entwining}) we have $a_{\kappa}\otimes \gamma(c)^{\kappa}=\alpha(a_{(0)})\otimes ca_{(1)}$, and thus

 \begin{eqnarray*}(aa')_{\kappa}\otimes \gamma(c)^{\kappa}&=&\alpha((aa')_{(0)})\otimes c((aa')_{(1)})\\
 &\overset{(\ref{Hom-comodule-algebra-cond})}{=}&\alpha(a_{(0)}a'_{(0)})\otimes c(a_{(1)}a'_{(1)})= \alpha(a_{(0)})\alpha(a'_{(0)})\otimes (\gamma^{-1}(c)a_{(1)})\beta(a'_{(1)})\\
 &\overset{(\ref{Doi-Koppinen-entwining})}{=}&a_{\kappa}\alpha(a'_{(0)})\otimes c^{\kappa}\beta(a'_{(1)})\\
 &=&a_{\kappa}\alpha(a'_{(0)})\otimes\gamma( \gamma^{-1}(c^{\kappa})a'_{(1)})\\
 &\overset{(\ref{Doi-Koppinen-entwining})}{=}&a_{\kappa}a'_{\lambda}\otimes \gamma(c^{\kappa\lambda}),
 \end{eqnarray*}

which shows that $\psi$ satisfies (\ref{hom-entwining-cond-1}). To prove that $\psi$ fulfills (\ref{hom-entwining-cond-2}) we have the computation

\begin{eqnarray*}\alpha^{-1}(a_{\kappa})\otimes c^{\kappa}_{\ 1}\otimes c^{\kappa}_{\ 2}&=&\alpha^{-1}(\alpha(a_{(0)}))\otimes (\gamma^{-1}(c)a_{(1)})_1\otimes (\gamma^{-1}(c)a_{(1)})_2\\
&\overset{(\ref{Hom-module-coalgebra-cond})}{=}&a_{(0)}\otimes \gamma^{-1}(c)_1a_{(1)1}\otimes\gamma^{-1}(c)_2a_{(1)2}\\
&=&a_{(0)}\otimes \gamma^{-1}(c_1)a_{(1)1}\otimes\gamma^{-1}(c_2)a_{(1)2}\\
&\overset{(\ref{Hom-coaction-cond-1})}{=}&\alpha(a_{(0)(0)})\otimes \gamma^{-1}(c_1)a_{(0)(1)}\otimes\gamma^{-1}(c_2)\beta^{-1}(a_{(1)})\\
&\overset{(\ref{Doi-Koppinen-entwining})}{=}&{a_{(0)}}_{\kappa}\otimes c_1^{\ \kappa}\otimes\gamma^{-1}(c_2)\beta^{-1}(a_{(1)})\\
&=&\alpha(\alpha^{-1}(a_{(0)}))_{\kappa}\otimes c_1^{\ \kappa}\otimes\gamma^{-1}(c_2)\beta^{-1}(a_{(1)})\\
&\overset{(\ref{Hom-coaction-cond-2})}{=}&\alpha(\alpha^{-1}(a)_{(0)})_{\kappa}\otimes c_1^{\ \kappa}\otimes\gamma^{-1}(c_2)\alpha^{-1}(a)_{(1)}\\
&\overset{(\ref{Doi-Koppinen-entwining})}{=}&\alpha^{-1}(a)_{\lambda\kappa}\otimes c_1^{\ \kappa}\otimes c_2^{\ \lambda}.
\end{eqnarray*}
To finish the proof of (1) we finally verify that $\psi$ satisfies (\ref{hom-entwining-cond-3}) and (\ref{hom-entwining-cond-4}) as follows,

$$1_{\kappa}\otimes c^{\kappa}=\alpha(1_{(0)})\otimes \gamma^{-1}(c)1_{(1)}=\alpha(1_A)\otimes \gamma^{-1}(c)1_B=1\otimes c,$$

\begin{eqnarray*}a_{\kappa}\varepsilon(c^{\kappa})&=&\alpha(a_{(0)})\varepsilon(\gamma^{-1}(c)a_{(1)})=\alpha(a_{(0)})\varepsilon(\gamma^{-1}(c\beta(a_{(1)})))\\
&\overset{(\ref{Hom-coring-cond-2})}{=}&\alpha(a_{(0)})\varepsilon(c\beta(a_{(1)}))\overset{(\ref{Hom-module-coalgebra-cond})}{=} \alpha(a_{(0)})\varepsilon(c)\varepsilon_B(\beta(a_{(1)})) \\
&=&\alpha(a_{(0)}\varepsilon_B(a_{(1)}))\varepsilon(c)\overset{(\ref{Hom-coaction-cond-1})}{=}\alpha(\alpha^{-1}(a))\varepsilon(c)\\
&=&a\varepsilon(c).
\end{eqnarray*}

\item We see that the condition for entwined Hom-modules,i.e., $\rho^{M}(ma)=m_{(0)}\alpha^{-1}(a)_{\kappa}\otimes \gamma(m_{(1)}^{\ \ \  \kappa})$ and the condition in (\ref{Doi-Koppinen-Hom-module-cond}) are equivalent by the following, for $m \in M$ and $a\in A$,

    \begin{eqnarray*}m_{(0)}\alpha^{-1}(a)_{\kappa}\otimes \gamma(m_{(1)}^{\ \ \  \kappa})&=&m_{(0)}\alpha(\alpha^{-1}(a)_{(0)})\otimes \gamma(\gamma^{-1}(m_{(1)})\alpha^{-1}(a)_{(1)})\\
    &=&m_{(0)}\alpha(\alpha^{-1}(a_{(0)}))\otimes\gamma(\gamma^{-1}(m_{(1)})\beta^{-1}(a_{(1)}))\\
    &=&m_{(0)}a_{(0)}\otimes\gamma(\gamma^{-1}(m_{(1)}a_{(1)}))\\
    &=&m_{(0)}a_{(0)}\otimes m_{(1)}a_{(1)}.
    \end{eqnarray*}

\item We only prove that the right $(A,\alpha)$-Hom-module structure holds as is given in the assertion. The rest of the structure of the corresponding Hom-coring can be seen at once from Theorem (\ref{Hom-Coring-Assoc-Hom-Entw-Strc}). For $a,a' \in A$ and $c\in C$,

     \begin{eqnarray*}(a'\otimes c)a&=&a'\alpha^{-1}(a)_{\kappa}\otimes \gamma(c^{\kappa})\\
     &=&a'\alpha(\alpha^{-1}(a)_{(0)})\otimes \gamma(\gamma^{-1}(c)\alpha^{-1}(a)_{(1)})=a'a_{(0)})\otimes \gamma(\gamma^{-1}(c)\beta^{-1}(a_{(1)}))\\
     &=&a'a_{(0)}\otimes ca_{(1)}.
      \end{eqnarray*}

\item By the definition of product $*_{\psi}$ given in Theorem (\ref{dual-algebra-of-Hom-coring-to-entw}) and the definition of $\psi$ given in (\ref{Doi-Koppinen-entwining}) we have, for $f,g \in Hom^{\mathcal{H}}(C,A)$ and $c \in C$,

\begin{eqnarray*}(f*_{\psi} g)(c)&=&f(c_2)_{\kappa}g(c_1^{\ \kappa})\\
&=&\alpha(f(c_2)_{(0)})g(\gamma^{-1}(c_1)f(c_2)_{(1)})=\alpha(f(c_2)_{(0)})g(\gamma^{-1}(c_1\beta(f(c_2)_{(1)})))\\
&=&\alpha(f(c_2)_{(0)})\alpha^{-1}(g(c_1\beta(f(c_2)_{(1)})))=\alpha(f(c_2))_{(0)}\alpha^{-1}(g(c_1\alpha(f(c_2))_{(1)})).
\end{eqnarray*}

\end{enumerate}
\end{proof}

\begin{definition} A triple $[(A,\alpha),(B,\beta),(C,\gamma)]$ is called a {\it (right-right) Hom-Doi-Koppinen datum} if it satisfies the conditions of Proposition (\ref{Hom-coring-assoc-Doi-Koppinen-datum}), that is, if $(A,\alpha)$ is a right $(B,\beta)$-Hom-comodule algebra and $(C,\gamma)$ is a right $(B,\beta)$-Hom-module coalgebra for a monoidal Hom-bialgebra $(B,\beta)$.

$[(A,\alpha), (C,\gamma)]_{\psi}$ in Proposition (\ref{Hom-coring-assoc-Doi-Koppinen-datum}) is called a {\it Hom-entwining structure associated to a Hom-Doi-Koppinen datum}.

A {\it Doi-Koppinen Hom-Hopf module} or a {\it unifying Hom-Hopf module} is a Hom-module satisfying the condition (\ref{Doi-Koppinen-Hom-module-cond}).
\end{definition}

Now we give the following collection of examples. Each of them is a special case of the construction given above.

\begin{example}{\textbf{Relative entwinings and relative Hom-Hopf modules}} Let $(B,\beta)$ be a monoidal Hom-bialgebra and let $(A,\alpha)$ be a $(B,\beta)$-Hom-comodule algebra with Hom-coaction $\rho^{A}:A\to A\otimes B, a\mapsto a_{(0)}\otimes a_{(1)}$.
\begin{enumerate}
\item $[(A,\alpha),(B,\beta)]_{\psi}$, with $\psi:B\otimes A\to A\otimes B,\: b\otimes a\mapsto \alpha(a_{(0)})\otimes \beta^{-1}(b)a_{(1)}$, is an Hom-entwining structure.
 \item $(M,\mu)$ is an $[(A,\alpha),(B,\beta)]_{\psi}$-entwined Hom-module if and only if it is a right $(A,\alpha)$-Hom-module with $\rho_{M}: M\otimes A \to M,\: m\otimes a\mapsto ma$ and a right $(B,\beta)$-Hom-comodule with $\rho^{M}:M\to M\otimes B,\: m\mapsto m_{[0]}\otimes m_{[1]}$ such that
   \begin{equation}\rho^{M}(ma)=m_{[0]}a_{(0)}\otimes m_{[1]}a_{(1)} \end{equation}
   for all $m\in M$ and $a\in A$. Hom-modules fulfilling the above condition are called {\it relative Hom-Hopf modules} (see \cite{GuoChen}).
 \item  $(\cC,\chi)=(A\otimes B,\alpha\otimes\beta)$ is a $(A,\alpha)$-Hom-coring with comultiplication $\Delta_{\cC}(a\otimes b)=(\alpha^{-1}(a)\otimes b_1)\otimes_A(1_A\otimes b_2)$ and counit $\varepsilon_{\cC}(a\otimes b)=\alpha(a)\varepsilon_B(b)$, and $(A,\alpha)$-Hom-bimodule structure $$a(a'\otimes b)=\alpha^{-1}(a)a'\otimes \beta(b),\:(a'\otimes b)a=a'a_{(0)}\otimes ba_{(1)}$$
    for all $a,a' \in A$ and $b \in B$.
\end{enumerate}

\end{example}

\begin{proof}The relevant Hom-Doi-Koppinen datum is $[(A,\alpha),(B,\beta),(B,\beta)]$, where the first object $(A,\alpha)$ is assumed to be a right $(B,\beta)$-Hom-comodule algebra with the Hom-coaction $\rho^{A}: a\mapsto a_{(0)}\otimes a_{(1)}$ and the third object $(B,\beta)$ is a right $(B,\beta)$-Hom-module coalgebra with Hom-action given by its Hom-multiplication. Hence, $[(A,\alpha),(B,\beta)]_{\psi}$ is the associated Hom-entwining structure, where $\psi(b\otimes a)=\alpha(a_{(0)})\otimes \beta^{-1}(b)a_{(1)}$. Assertions (2) and (3) can be seen at once from Proposition (\ref{Hom-coring-assoc-Doi-Koppinen-datum}).
\end{proof}

\begin{remark}$(A,\alpha)$ itself is a relative Hom-Hopf-module by its Hom-multiplication and the $(B,\beta)$-Hom-coaction $\rho^{A}$.
For $(A,\alpha)=(B,\beta)$ one gets the notion of {\it Hom-Hopf modules} (see \cite{CaenepeelGoyvaerts}).
\end{remark}

\begin{example}{\textbf{Dual-relative entwinings and} $[(C,\gamma),(A,\alpha)]$\textbf{-Hom-Hopf modules}} Let $(A,\alpha)$ be a monoidal Hom-bialgebra and let $(C,\gamma)$ be a right $(A,\alpha)$-Hom-module coalgebra with Hom-action $\rho_{C}:C\otimes A\to C, c \otimes a\mapsto ca$.
\begin{enumerate}
\item $[(A,\alpha),(C,\gamma)]_{\psi}$, with $\psi:C\otimes A\to A\otimes C,\: c\otimes a\mapsto \alpha(a_{1})\otimes \beta^{-1}(c)a_{2}$, is an Hom-entwining structure.
 \item $(M,\mu)$ is an $[(A,\alpha),(C,\gamma)]_{\psi}$-entwined Hom-module if and only if it is a right $(A,\alpha)$-Hom-module with $\rho_{M}: M\otimes A \to M,\: m\otimes a\mapsto ma$ and a right $(C,\gamma)$-Hom-comodule with $\rho^{M}:M\to M\otimes B,\: m\mapsto m_{(0)}\otimes m_{(1)}$ such that
   \begin{equation}\rho^{M}(ma)=m_{(0)}a_{1}\otimes m_{(1)}a_{2} \end{equation}
   for all $m\in M$ and $a\in A$. Such a Hom-module is called $[(C,\gamma),(A,\alpha)]${\it -Hom-Hopf module}.
 \item  $(\cC,\chi)=(A\otimes C,\alpha\otimes\gamma)$ is a $(A,\alpha)$-Hom-coring with comultiplication $\Delta_{\cC}(a\otimes c)=(\alpha^{-1}(a)\otimes c_1)\otimes_A(1_A\otimes c_2)$ and counit $\varepsilon_{\cC}(a\otimes b)=\alpha(a)\varepsilon_C(c)$, and $(A,\alpha)$-Hom-bimodule structure $$a(a'\otimes b)=\alpha^{-1}(a)a'\otimes \gamma(c),\:(a'\otimes c)a=a'a_{1}\otimes ca_{2}$$
    for all $a,a' \in A$ and $c \in C$.
\end{enumerate}
\end{example}

\begin{proof} $(A,\alpha)$ is a right $(A,\alpha)$-Hom-comodule algebra with Hom-coaction given by the Hom-comultiplication
$$\rho^{A}=\Delta_A: A \to A\otimes A,\: a\mapsto a_{(0)}\otimes a_{(1)}=a_1\otimes a_2,$$ since $\Delta_A$ is a Hom-algebra morphism. Besides $(C,\gamma)$ is assumed to be a right $(A,\alpha)$-Hom-module coalgebra with Hom-action $\rho_{C}(c\otimes a)=ca$. Thus, the related Hom-Doi-Koppinen datum is $[(A,\alpha),(A,\alpha),(C,\gamma)]$. Then $[(A,\alpha), (C,\gamma)]_{\psi}$ is the Hom-entwining structure associated to the datum, where
$$\psi(c\otimes a)=\alpha(a_{(0)})\otimes \gamma^{-1}(c)a_{(1)}=\alpha(a_{1})\otimes \gamma^{-1}(c)a_{2}.$$
The assertions (2) and (3) are also immediate by Proposition (\ref{Hom-coring-assoc-Doi-Koppinen-datum}).
\end{proof}
\begin{remark}$(C,\gamma)$ itself is a $[(C,\gamma),(A,\alpha)]$-Hom-Hopf-module by the $(A,\alpha)$-Hom-action $\rho_{C}$ and its Hom-comultiplication.
\end{remark}

The example below gives a Hom-generalization of the so-called $(\alpha,\beta)$-Yetter-Drinfeld modules introduced in \cite{PanaiteStaic} as an entwined Hom-module.

Following \cite{CaenepeelGoyvaerts}, a monoidal Hom-Hopf algebra $(H,\alpha)$ is a Hopf algebra in in $\widetilde{\mathcal{H}}(\mathcal{M}_k)$, i.e. it consists of
a septuple  $(H,\alpha,m,\eta,\Delta,\varepsilon,S)$ where $(H,\alpha,m,\eta,\Delta,\varepsilon)$ is a monoidal Hom-bialgebra and $S:H \to H$ is a morphism in $\widetilde{\mathcal{H}}(\mathcal{M}_k)$ such that $S\ast id_H=id_H\ast S=\eta\circ\varepsilon$. $S$ is called antipode and it has the following properties
\begin{equation*} S(gh)=S(h)S(g)\: ; \: S(1_H)=1_H\:;\Delta(S(h))=S(h_2)\otimes S(h_1)\: ; \: \varepsilon\circ S=\varepsilon,\end{equation*}
for any elements $g,h \in H$.

\begin{example}{\textbf{Generalized Yetter-Drinfeld entwinings and} $(\phi,\varphi)$\textbf{-Hom-Yetter-Drinfeld modules}}
Let $(H,\alpha)$ be a monoidal Hom-Hopf algebra and let $\phi, \varphi : H\to H$ be two monoidal Hom-Hopf algebra automorphisms. Define the map, for all $h,g\in H$
\begin{equation}\psi:H\otimes H\to H\otimes H, \: g\otimes h\mapsto \alpha^{2}(h_{21})\otimes \varphi(S(h_1))(\alpha^{-2}(g)\phi(h_{22})),\end{equation}
where $S$ is the antipode of $H$.
\begin{enumerate}
\item $[(H,\alpha),(H,\alpha)]_{\psi}$ is an Hom-entwining structure.
\item $(M,\mu)$ is an $[(H,\alpha),(H,\alpha)]_{\psi}$-entwined Hom-module if and only if it is a right $(H,\alpha)$-Hom-module with $\rho_{M}: M\otimes H \to M,\: m\otimes h\mapsto mh$ and a right $(H,\alpha)$-Hom-comodule with $\rho^{M}:M\to M\otimes H,\: m\mapsto m_{(0)}\otimes m_{(1)}$ such that
   \begin{equation}\label{generalized-YD-Hom-mod-cond}\rho^{M}(mh)=m_{(0)}\alpha(h_{21})\otimes \varphi(S(h_1))(\alpha^{-1}(m_{(1)})\phi(h_{22}))\end{equation}
   for all $m\in M$ and $h\in H$. A Hom-module $(M,\mu)$ satisfying this condition is called $(\phi,\varphi)$-{\it Hom-Yetter-Drinfeld module }.
\item $(\cC,\chi)=(H\otimes H, \alpha\otimes\alpha)$ is an $(H,\alpha)$-Hom-coring with comultiplication $\Delta_{\cC}(h\otimes h')=(\alpha^{-1}(h)\otimes h'_1)\otimes_H(1_H\otimes h'_2)$ and counit $\varepsilon_{\cC}(h\otimes h')=\alpha(h)\varepsilon_H(h')$, and $(H,\alpha)$-Hom-bimodule structure $$g(h\otimes h')=\alpha^{-1}(g)h\otimes \alpha(h'),\:(h\otimes h')g=h\alpha(g_{21})\otimes \varphi(S(g_1))(\alpha^{-1}(h')\phi(g_{22}))$$
    for all $h,h',g \in H$.
\end{enumerate}
\end{example}

\begin{proof} In the first place, we prove that the map $$\rho^{H}:H\to H\otimes(H^{op}\otimes H), \: h\mapsto h_{(0)}\otimes h_{(1)}:=\alpha(h_{21})\otimes (\alpha^{-1}(\varphi(S(h_1)))\otimes h_{22})$$ defines a $(H^{op}\otimes H,\alpha\otimes\alpha)$-Hom-comodule algebra structure on $(H,\alpha)$. Let us put $(H^{op}\otimes H,\alpha\otimes\alpha)=(\widetilde{H},\tilde{\alpha})$. Then

\begin{eqnarray*}\lefteqn{h_{(0)(0)}\otimes(h_{(0)(1)})\otimes \tilde{\alpha}^{-1}(h_{(1)})}\hspace{2em}\\
&=&\alpha(\alpha(h_{21})_{21})\otimes ((\alpha^{-1}(\varphi(S(\alpha(h_{21})_1)))\otimes \alpha(h_{21})_{22})\otimes (\alpha^{-2}(\varphi(S(h_1)))\otimes \alpha^{-1}(h_{22})))\\
&=&\alpha^{2}(h_{2121})\otimes ((\alpha^{-1}(\varphi(S(\alpha(h_{211}))))\otimes \alpha(h_{2122}))\otimes (\alpha^{-2}(\varphi(S(h_1)))\otimes \alpha^{-1}(h_{22})))\\
&=&\alpha^{2}(h_{2121})\otimes ((\varphi(S(h_{211}))\otimes \alpha(h_{2122}))\otimes (\alpha^{-2}(\varphi(S(h_1)))\otimes \alpha^{-1}(h_{22})))\\
&=&h_{21}\otimes ((\alpha^{-1}(\varphi(S(h_{12})))\otimes h_{221})\otimes (\alpha^{-1}(\varphi(S(h_{11})))\otimes h_{222}))\\
&=&h_{21}\otimes ((\alpha^{-1}(\varphi(S(h_{1})))_{1}\otimes h_{221})\otimes (\alpha^{-1}(\varphi(S(h_{1})))_{2}\otimes h_{222}))\\
&=&\alpha^{-1}(h_{(0)})\otimes \Delta_{\widetilde{H}}(h_{(1)}),
\end{eqnarray*}
where in the fourth step we used
$$\alpha(h_{11})\otimes \alpha^{-1}(h_{12})\otimes\alpha^{-2}(h_{21})\otimes\alpha^{-1}(h_{221})\otimes\alpha(h_{222})=h_{1}\otimes h_{211}\otimes h_{2121}\otimes h_{2122}\otimes h_{22},$$
which can be obtained by applying the Hom-coassociativity of $\Delta_H$ three times. We also have

\begin{eqnarray*}h_{(0)}\varepsilon_{\widetilde{H}}(h_{(0)})&=&\alpha(h_{21})\varepsilon(\alpha^{-1}(\varphi(S(h_{1}))))\varepsilon(h_{22})\\
&=&\alpha(h_{21}\varepsilon(h_{22}))\varepsilon(\alpha^{-1}(\varphi(S(h_{1}))))=\alpha(\alpha^{-1}(h_2))\varepsilon(h_1)\\
&=&\alpha^{-1}(h),
\end{eqnarray*}
where in the third equality we used the relations $\varepsilon\circ \alpha^{-1}=\varepsilon$, $\varepsilon\circ \varphi=\varepsilon$ and $\varepsilon\circ S=\varepsilon$. One can easily check that the relations $\rho^{H}\circ\alpha=(\alpha\otimes\tilde{\alpha})\circ\rho^{H}$ and $\rho^{H}(1_H)=1_H\otimes 1_{\widetilde{H}}$ hold.
For $g,h \in H$,

\begin{eqnarray*}\rho^{H}(g)\rho^{H}(g)&=&(\alpha(g_{21})\otimes (\alpha^{-1}(\varphi(S(g_1)))\otimes g_{22}))(\alpha(h_{21})\otimes (\alpha^{-1}(\varphi(S(h_1)))\otimes h_{22}))\\
&=&\alpha(g_{21})\alpha(h_{21})\otimes ( \alpha^{-1}(\varphi(S(h_1)))\alpha^{-1}(\varphi(S(g_1)))\otimes g_{22}h_{22})\\
&=&\alpha(g_{21}h_{21})\otimes ( \alpha^{-1}(\varphi(S(h_1)S(g_1)))\otimes g_{22}h_{22})\\
&=&\alpha((gh)_{21})\otimes ( \alpha^{-1}(\varphi(S((gh)_1)))\otimes (gh)_{22})\\
&=&\rho^{H}(gh),
\end{eqnarray*}
which completes the proof of the statement that $\rho^{H}$ makes $(H,\alpha)$ an $(\widetilde{H},\tilde{\alpha})$-Hom-comodule algebra. We next consider the map, for all $g,h,k \in H$
$$\rho_{H}: H\otimes \widetilde{H}\to H, \: g\cdot (h\otimes k):=(h\alpha^{-1}(g))\phi(\alpha(k))$$
and we claim that it defines an $(\widetilde{H},\tilde{\alpha})$-Hom-module coalgebra structure on $(H,\alpha)$: Indeed,

\begin{eqnarray*}(g\cdot(h\otimes k))\cdot(\alpha(h')\otimes\alpha(k'))&=&((h\alpha^{-1}(g))\phi(\alpha(k)))\cdot(\alpha(h')\otimes\alpha(k'))\\
&=&(\alpha(h')((\alpha^{-1}(h)\alpha^{-2}(g))\alpha^{-1}(\phi(\alpha(k)))))\phi(\alpha^{2}(k'))\\
&=&(\alpha(h')((\alpha^{-1}(h)\alpha^{-2}(g))\phi(k)))\phi(\alpha^{2}(k'))\\
&=&((h'(\alpha^{-1}(h)\alpha^{-2}(g)))\alpha(\phi(k)))\phi(\alpha^{2}(k'))\\
&=&(\alpha^{-1}((h'h)g)\alpha(\phi(k)))\phi(\alpha^{2}(k'))\\
&=&((h'h)g)(\alpha(\phi(k))\alpha^{-1}(\phi(\alpha^{2}(k'))))=((h'h)g)(\phi(\alpha(k))\phi(\alpha(k')))\\
&=&((h'h)g)\phi(\alpha((kk')))=((h'h)\alpha^{-1}(\alpha(g)))\phi(\alpha((kk')))\\
&=&\alpha(g)\cdot(h'h\otimes kk')=\alpha(g)\cdot((h\otimes k)(h'\otimes k')),
\end{eqnarray*}
$$h\cdot (1_H\otimes 1_H)=(1_H\alpha^{-1}(h))\phi(\alpha(1_H))=\alpha(h),$$

\begin{eqnarray*}(g\cdot(h\otimes k))_1\otimes(g\cdot(h\otimes k))_2&=& ((h\alpha^{-1}(g))\phi(\alpha(k)))_1\otimes ((h\alpha^{-1}(g))\phi(\alpha(k)))_2\\
&=&(h\alpha^{-1}(g))_1\phi(\alpha(k))_1\otimes (h\alpha^{-1}(g))_2\phi(\alpha(k))_2\\
&=&(h_1\alpha^{-1}(g_1))\phi(\alpha(k_1))\otimes (h_2\alpha^{-1}(g_2))\phi(\alpha(k_2))\\
&=&g_1\cdot(h_1\otimes k_1)\otimes g_2\cdot(h_2\otimes k_2)\\
&=&g_1\cdot(h\otimes k)_1\otimes g_2\cdot(h\otimes k)_2,
\end{eqnarray*}

$$\varepsilon(g\cdot(h\otimes k))=\varepsilon((h\alpha^{-1}(g))\phi(\alpha(k)))
=\varepsilon(h)\varepsilon(\alpha^{-1}(g))\varepsilon(\phi(\alpha(k)))=\varepsilon(h)\varepsilon(g)\varepsilon(k)
=\varepsilon(h)\varepsilon_{\widetilde{H}}(g\otimes k),$$
proving that $(H,\alpha)$ is an $(\widetilde{H},\tilde{\alpha})$-Hom-module coalgebra with the Hom-action $\rho_{H}$.
Hence, the Hom-Doi-Koppinen datum is given by $[(H,\alpha),(H^{op}\otimes H,\alpha\otimes\alpha),(H,\alpha)]$ to which the Hom-entwining structure $[(H,\alpha),(H,\alpha)]_{\psi}$ is associated, where we have the entwining map $\psi:H\otimes H\to H\otimes H$ as

\begin{eqnarray*}\psi(g\otimes h)&=&\alpha(h_{(0)})\alpha^{-1}(g)\cdot h_{(1)}=\alpha(\alpha(h_{21}))\otimes \alpha^{-1}(g)\cdot(\alpha^{-1}(\varphi(S(h_1)))\otimes h_{22})\\
&=&\alpha^{2}(h_{21})\otimes (\alpha^{-1}(\varphi(S(h_1)))\alpha^{-2}(g))\phi(\alpha(h_{22}))\\
&=&\alpha^{2}(h_{21})\otimes \varphi(S(h_1))(\alpha^{-2}(g)\phi(h_{22})).
\end{eqnarray*}

For $m\in M$ and $h\in H$, we have the condition (\ref{generalized-YD-Hom-mod-cond})

\begin{eqnarray*}\rho^{M}(mh)&=&m_{(0)}h_{(0)}\otimes m_{(1)}\cdot h_{(1)}\\
&=&m_{(0)}\alpha(h_{21})\otimes m_{(1)}\cdot(\alpha^{-1}(\varphi(S(h_1)))\otimes h_{22})\\
&=&m_{(0)}\alpha(h_{21})\otimes \alpha^{-1}(\varphi(S(h_1))m_{(1)})\phi(\alpha(h_{22}))\\
&=&m_{(0)}\alpha(h_{21})\otimes \varphi(S(h_1))(\alpha^{-1}(m_{(1)})\phi(h_{22})).
\end{eqnarray*}
By the above proposition, the $(H,\alpha)$-Hom-coring structure of $(H\otimes H,\alpha\otimes\alpha)$ is immediate. Here we only write down the right Hom-module condition
\begin{eqnarray*}(h\otimes h')g&=&hg_{(0)}\otimes h'\cdot g_{(1)}\\
&=&h\alpha(g_{21})\otimes h'\cdot (\alpha^{-1}(\varphi(S(g_1)))\otimes g_{22})\\
&=&h\alpha(g_{21})\otimes \varphi(S(g_1))(\alpha^{-1}(h') \phi(g_{22})),
\end{eqnarray*}
completing the proof.
\end{proof}

\begin{remark}
\begin{enumerate}
\item By putting $\phi=id_H=\varphi$ in the compatibility condition (\ref{generalized-YD-Hom-mod-cond}) we get the usual condition for (right-right) Hom-Yetter-Drinfeld modules, which is
    \begin{equation}\label{Hom-YD-mod-cond} \rho^{M}(mh)=m_{(0)}\alpha(h_{21})\otimes S(h_1)(\alpha^{-1}(m_{(1)})h_{22}).\end{equation}
 \item If the antipode $S$ of $(H,\alpha)$ is a bijection , then by taking $\phi=id_H$ and $\varphi=S^{-2}$ , we have the compatibility condition for (right-right) {\it anti-Hom-Yetter-Drinfeld modules} as follows
 \begin{equation}\label{anti-Hom-YD-mod-cond}\rho^{M}(mh)=m_{(0)}\alpha(h_{21})\otimes S^{-1}(h_1)(\alpha^{-1}(m_{(1)})h_{22}).\end{equation}
\end{enumerate}
\end{remark}

We get an equivalent condition for the generalized Hom-Yetter-Drinfeld modules by the following

\begin{proposition}The compatibility condition (\ref{generalized-YD-Hom-mod-cond}) for $(\phi,\varphi)$-Hom-Yetter-Drinfeld modules is equivalent to the equation \begin{equation}\label{equiv-generalized-YD-Hom-mod-cond}m_{(0)}\alpha^{-1}(h_1)\otimes m_{(1)}\phi(\alpha^{-1}(h_2))=(mh_2)_{(0)}\otimes \alpha^{-1}(\varphi(h_1)(mh_2)_{(1)}).\end{equation}
\end{proposition}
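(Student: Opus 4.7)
The plan is to prove the equivalence by establishing each implication separately, in both cases exploiting the antipode axiom $c_1 S(c_2) = \eta_H\circ\varepsilon(c) = S(c_1)c_2$ to neutralize the $\varphi(h_1)$--twist that distinguishes (\ref{equiv-generalized-YD-Hom-mod-cond}) from (\ref{generalized-YD-Hom-mod-cond}). Both directions rest on the same three bookkeeping tools: iterated Hom-coassociativity from (\ref{Hom-coring-cond-1}), Hom-associativity (\ref{monoidal-Hom-alg-cond-1}), and the fact (recorded when the example was introduced) that $\phi$ and $\varphi$ are monoidal Hom-Hopf algebra automorphisms, so they are Hom-algebra morphisms commuting with $\alpha$ and with $S$.

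For $(\ref{generalized-YD-Hom-mod-cond})\Rightarrow(\ref{equiv-generalized-YD-Hom-mod-cond})$, I would substitute $h\rightsquigarrow h_2$ in (\ref{generalized-YD-Hom-mod-cond}) to expand $(mh_2)_{(0)}\otimes(mh_2)_{(1)}$ as
$m_{(0)}\alpha(h_{221})\otimes \varphi(S(h_{21}))(\alpha^{-1}(m_{(1)})\phi(h_{222}))$, where the quadruple $h_1\otimes h_{21}\otimes h_{221}\otimes h_{222}$ is a leg of $\Delta^{3}(h)$. Plugging this into the RHS of (\ref{equiv-generalized-YD-Hom-mod-cond}) leaves the inner product $\varphi(h_1)\cdot\bigl[\varphi(S(h_{21}))(\alpha^{-1}(m_{(1)})\phi(h_{222}))\bigr]$, which I would regroup by Hom-associativity as $\varphi\bigl(\alpha^{-1}(h_1)S(h_{21})\bigr)\cdot\bigl(m_{(1)}\phi(\alpha(h_{222}))\bigr)$. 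Iterated Hom-coassociativity rewrites $\alpha^{-1}(h_1)\otimes h_{21}\otimes h_{22} = h_{11}\otimes h_{12}\otimes\alpha^{-1}(h_2)$, and after applying $\Delta$ to the last leg, the combination $\alpha^{-1}(h_1)S(h_{21})\otimes h_{22}$ becomes $h_{11}S(h_{12})\otimes \alpha^{-1}(h_2) = \varepsilon(h_1)1_H\otimes \alpha^{-1}(h_2)$. The counit axiom $\varepsilon(h_1)h_2 = h$ collapses the middle factor, and after reinstating the outstanding $\alpha$-factors and the Hom-comodule compatibility (\ref{Hom-coaction-cond-2}) the result matches $m_{(0)}\alpha^{-1}(h_1)\otimes m_{(1)}\phi(\alpha^{-1}(h_2))$.

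For the converse $(\ref{equiv-generalized-YD-Hom-mod-cond})\Rightarrow(\ref{generalized-YD-Hom-mod-cond})$, the strategy is dual: I would start from the RHS of (\ref{generalized-YD-Hom-mod-cond}) and recognize the action of $\varphi(S(h_1))$ on the second tensor slot as a \emph{left inverse} of the twist by $\varphi(h_1)$ that appears on the right-hand side of (\ref{equiv-generalized-YD-Hom-mod-cond}). Concretely, apply (\ref{equiv-generalized-YD-Hom-mod-cond}) with $h\rightsquigarrow h_2$, obtaining a relation involving $(mh_{22})_{(0)}\otimes\alpha^{-1}(\varphi(h_{21})(mh_{22})_{(1)})$; multiplying the $H$-slot of both sides on the left by $\varphi(S(h_1))$ and regrouping by Hom-associativity turns $\varphi(S(h_1))\cdot\varphi(h_{21})$ into $\varphi(\alpha^{-1}(S(h_1))\cdot h_{21})$, which by iterated Hom-coassociativity and the antipode axiom simplifies to a $\varepsilon$-scalar times $1_H$. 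After collapsing by the counit axiom and using (\ref{Hom-coaction-cond-2}), the LHS of the modified relation becomes $\rho^M(mh)$ while the RHS assumes precisely the form given in (\ref{generalized-YD-Hom-mod-cond}).

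The substantive work is almost entirely bookkeeping: no really new idea enters beyond the classical equivalence of the two forms of the Yetter--Drinfeld condition. The main obstacle, therefore, is tracking the many $\alpha^{\pm 1}$ twists introduced by each use of Hom-coassociativity, Hom-associativity, and the Hom-comodule axioms (\ref{Hom-coaction-cond-1})--(\ref{Hom-coaction-cond-2}); a single misplaced $\alpha$ breaks the cancellation $h_1 S(h_2)=\eta\varepsilon(h)$ on which the whole argument pivots. Since $\phi$ and $\varphi$ commute with $\alpha$ and $S$, however, their presence does not obstruct the cancellation, and the calculation reduces to a careful transcription of the classical proof into the Hom-calculus.
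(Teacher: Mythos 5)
Your proposal is correct and follows essentially the same route as the paper: both directions substitute $h\rightsquigarrow h_2$ (up to powers of $\alpha$) into the assumed identity, regroup with Hom-associativity and the fact that $\varphi$ is a Hom-algebra morphism, and then collapse $\varphi(h_1)\varphi(S(h_{21}))$ (resp.\ $\varphi(S(h_1))\varphi(h_{2\cdots}))$ using iterated Hom-coassociativity, the antipode axiom and the counit. The paper's proof is exactly this computation written out, with the iterated Hom-coassociativity identity you invoke recorded there as equation (\ref{twice-Hom-coassociativity}).
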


\begin{proof}Assume that (\ref{equiv-generalized-YD-Hom-mod-cond}) holds, then
\begin{eqnarray*} \lefteqn{m_{(0)}\alpha(h_{21})\otimes \varphi(S(h_1))(\alpha^{-1}(m_{(1)})\phi(h_{22}))}\hspace{4em}\\
&=&m_{(0)}\alpha^{-1}(\alpha^{2}(h_{21}))\otimes \varphi(S(h_1))(\alpha^{-1}(m_{(1)})\alpha^{-2}(\phi(\alpha^{2}(h_{22}))))\\
&=&m_{(0)}\alpha^{-1}(\alpha^{2}(h_2)_1)\otimes \varphi(S(h_1))\alpha^{-1}(m_{(1)}\alpha^{-1}(\phi(\alpha^{2}(h_2)_2)))\\
&\overset{(\ref{equiv-generalized-YD-Hom-mod-cond})}{=}&(m\alpha^{2}(h_2)_2)_{(0)}\otimes \varphi(S(h_1))(\alpha^{-2}(\varphi(\alpha^{2}(h_2)_1))\alpha^{-2}((m\alpha^{2}(h_2)_2)_{(1)}))\\
&=&(m\alpha^{2}(h_{22}))_{(0)}\otimes \varphi(S(h_1))(\varphi(h_{21})\alpha^{-2}((m\alpha^{2}(h_{22}))_{(1)}))\\
&\overset{(\ref{Hom-coring-cond-1})}{=}&(m\alpha(h_{2}))_{(0)}\otimes \varphi(S(\alpha(h_{11})))(\varphi(h_{12})\alpha^{-2}((m\alpha(h_{2}))_{(1)}))\\
&=&(m\alpha(h_{2}))_{(0)}\otimes \varphi(S(h_{11})h_{12})\alpha^{-1}((m\alpha(h_{2}))_{(1)})\\
&=&(m\alpha(h_{2}))_{(0)}\otimes \varphi(\varepsilon(h_1)1_H)\alpha^{-1}((m\alpha(h_{2}))_{(1)}))\\
&=&\varepsilon(h_1)(m\alpha(h_{2}))_{(0)}\otimes(m\alpha(h_{2}))_{(1)}\\
&=&\varepsilon(h_1)\rho^{M}(m\alpha(h_{2}))=\rho^{M}(mh),
\end{eqnarray*}
which gives us (\ref{generalized-YD-Hom-mod-cond}).
One can easily show that by applying the Hom-coassociativity condition (\ref{Hom-coring-cond-1}) twice we have
\begin{equation}\label{twice-Hom-coassociativity}\alpha^{-1}(h_1)\otimes h_{21}\otimes \alpha(h_{221})\otimes \alpha(h_{222})=h_{11}\otimes h_{12}\otimes h_{21}\otimes h_{22}, \end{equation}
which is used in the below computation.
 Thus, if we suppose that (\ref{generalized-YD-Hom-mod-cond}) holds, then
\begin{eqnarray*}\lefteqn{(mh_2)_{(0)}\otimes \alpha^{-1}(\varphi(h_1)(mh_2)_{(1)})}\hspace{3em}\\
&\overset{(\ref{generalized-YD-Hom-mod-cond})}{=}&m_{(0)}\alpha(h_{221})\otimes \alpha^{-1}(\varphi(h_1)(\varphi(S(h_{21}))(\alpha^{-1}(m_{(1)})\phi(h_{222}))))\\
&=&m_{(0)}\alpha(h_{221})\otimes \alpha^{-1}((\alpha^{-1}(\varphi(h_1))\varphi(S(h_{21})))(m_{(1)}\alpha(\phi(h_{222}))))\\
&\overset{(\ref{twice-Hom-coassociativity})}{=}&m_{(0)}h_{21}\otimes \alpha^{-1}((\varphi(h_{11})\varphi(S(h_{12})))(m_{(1)}\phi(h_{22})))\\
&=&m_{(0)}h_{21}\otimes (\varepsilon(h_1)1_H)\alpha^{-1}(m_{(1)}\phi(h_{22}))\\
&=&m_{(0)}h_{21}\otimes \varepsilon(h_1)m_{(1)}\phi(h_{22})\\
&\overset{(\ref{Hom-coring-cond-1})}{=}&m_{(0)}h_{12}\varepsilon(h_{11})\otimes m_{(1)}\phi(\alpha^{-1}(h_{2}))\\
&=&m_{(0)}\alpha^{-1}(h_1)\otimes m_{(1)}\phi(\alpha^{-1}(h_{2})),
\end{eqnarray*}
finishing the proof.
\end{proof}

\begin{remark} The above result implies that the equations (\ref{Hom-YD-mod-cond}) and (\ref{anti-Hom-YD-mod-cond}) are equivalent to
$$m_{(0)}\alpha^{-1}(h_1)\otimes m_{(1)}\alpha^{-1}(h_2)=(mh_2)_{(0)}\otimes \alpha^{-1}(h_1(mh_2)_{(1)})$$
and
$$m_{(0)}\alpha^{-1}(h_1)\otimes m_{(1)}\alpha^{-1}(h_2)=(mh_2)_{(0)}\otimes \alpha^{-1}(S^{-2}(h_1)(mh_2)_{(1)}),$$
respectively.
\end{remark}

\begin{example}{\textbf{The flip and Hom-Long dimodule}}
Let $(H,\alpha)$ be a monoidal Hom-bialgebra. Then:
\begin{enumerate}
\item $[(H,\alpha),(H,\alpha)]_{\psi}$, where $\psi:H\otimes H\to H\otimes H, \: g\otimes h\mapsto h\otimes g$, is an Hom-entwining structure.
\item $(M,\mu)$ is an $[(H,\alpha),(H,\alpha)]_{\psi}$-entwined Hom-module if and only if it is a right $(H,\alpha)$-Hom-module with $\rho_{M}: M\otimes H \to M,\: m\otimes h\mapsto mh$ and a right $(H,\alpha)$-Hom-comodule with $\rho^{M}:M\to M\otimes H,\: m\mapsto m_{(0)}\otimes m_{(1)}$ such that
   \begin{equation}\rho^{M}(mh)=m_{(0)}\alpha^{-1}(h)\otimes \alpha(m_{(1)})\end{equation}
   for all $m\in M$ and $h\in H$. Such Hom-modules $(M,\mu)$ are called {\it (right-right) $(H,\alpha)$-Hom-Long dimodules} (see \cite{ChenWangZhang1}).
\item $(\cC,\chi)=(H\otimes H, \alpha\otimes\alpha)$ is an $(H,\alpha)$-Hom-coring with comultiplication $\Delta_{\cC}(h\otimes h')=(\alpha^{-1}(h)\otimes h'_1)\otimes_H(1_H\otimes h'_2)$ and counit $\varepsilon_{\cC}(h\otimes h')=\alpha(h)\varepsilon_H(h')$, and $(H,\alpha)$-Hom-bimodule structure $$g(h\otimes h')=\alpha^{-1}(g)h\otimes \alpha(h'),\:(h\otimes h')g=h\alpha^{-1}(g)\otimes \alpha(h')$$
    for all $h,h',g \in H$.
\end{enumerate}
\end{example}

\begin{proof} $(H,\alpha)$ itself is a right $(H,\alpha)$-Hom-comodule algebra with Hom-coaction $\rho^{H}=\Delta_H:H\to H\otimes H,\:h\mapsto h_{(0)}\otimes h_{(1)}=h_1\otimes h_2$. In addition, $(H,\alpha)$ becomes a right $(H,\alpha)$-Hom-module coalgebra with the trivial Hom-action $\rho_{H}:H\otimes H\to H,\: g\otimes h\mapsto g\cdot h=\alpha(g)\varepsilon(h) $. Hence we have $[(H,\alpha),(H,\alpha),(H,\alpha)]$ as Hom-Doi-Koppinen datum with the associated Hom-entwining structure  $[(H,\alpha),(H,\alpha)]_{\psi}$, where $\psi(h'\otimes h)=\alpha(h_{(0)})\otimes \alpha^{-1}(h')\cdot h_{(1)}=\alpha(h_{1})\otimes \alpha^{-1}(h')\cdot h_{2}=\alpha(h_{1})\otimes \alpha(\alpha^{-1}(h'))\varepsilon(h_{2})=h\otimes h'$.
\end{proof}

\begin{definition}Let $(B,\beta)$ be a monoidal Hom-bialgebra. A {\it left} $(B,\beta)$-{\it Hom-comodule coalgebra} $(C,\gamma)$ is a monoidal Hom-coalgebra and a left $(B,\beta)$-Hom-comodule with a Hom-coaction $\rho:C\to B\otimes C,\: c \mapsto c_{(-1)}\otimes c_{(0)}$ such that, for any $c\in C$
\begin{equation}\label{Hom-comodule-coalgebra-cond}c_{(-1)}\otimes c_{(0)1}\otimes c_{(0)2}=c_{1(-1)}c_{2(-1)}\otimes c_{1(0)}\otimes c_{2(0)} ,\: c_{(-1)}\varepsilon_{C}(c_{(0)})=1_B\varepsilon_{C}(c),\end{equation}
$$\rho\circ\gamma=(\beta\otimes\gamma)\circ \rho.$$
\end{definition}

 We lastly introduce the below construction regarding the Hom-version of the so-called alternative Doi-Koppinen datum given in \cite{Schauenburg}.
For that we recall the definition of a Hom-module algebra from \cite{ChenWangZhang}: Let $(B,\beta)$ be a monoidal Hom-bialgebra. A {\it right} $(B,\beta)$-{\it Hom-module algebra} $(A,\alpha)$ is a monoidal Hom-algebra and a right $(B,\beta)$-Hom-module with a Hom-action $\rho_{A}: A\otimes B \to A,\: a\otimes b \mapsto a\cdot b$ such that, for any $a,a' \in A$ and $b\in B$
\begin{equation}\label{Hom-module-algebra-cond}b\cdot (aa')=(b_1\cdot a)(b_2\cdot a'),\:\: b\cdot 1_A=\varepsilon(b)1_A ,\end{equation}
$$\rho_{A}\circ (\alpha\otimes\beta)=\alpha\circ\rho_{A}.$$

\begin{proposition}\label{Alternative-Hom-Doi-Koppinen-datum}Let $(B,\beta)$ be a monoidal Hom-bialgebra. Let $(A,\alpha)$ be a left $(B,\beta)$-Hom-module algebra with Hom-action $_{A}\rho:B\otimes A \to A,\: b\otimes a\mapsto b\cdot a$ and $(C,\gamma)$ be a left $(B,\beta)$-Hom-comodule coalgebra with Hom-coaction $^{C}\rho:C\to B\otimes C,\: c\mapsto c_{(-1)}\otimes c_{(0)}$. Define the map
\begin{equation}\psi:C\otimes A\to A\otimes C,\: c\otimes a\mapsto c_{(-1)}\cdot \alpha^{-1}(a)\otimes \gamma(c_{(0)}) \end{equation}
Then the following statements hold.
\begin{enumerate}
\item $[(A,\alpha), (C,\gamma)]_{\psi}$ is an Hom-entwining structure.

\item $(M,\mu)$ is an $[(A,\alpha), (C,\gamma)]_{\psi}$-entwined Hom-module iff it is a right $(A,\alpha)$-Hom-module with $\rho_{M}:M\otimes A\to M,\: m\otimes a\mapsto ma$ and a right $(C,\gamma)$-Hom-comodule with $\rho^{M}:M\to M\otimes C,\: m\mapsto m_{[0]}\otimes m_{[1]}$ such that

\begin{equation}\label{alternative-Doi-Koppinen-Hom-module-cond}\rho^{M}(ma)=(ma)_{[0]}\otimes (ma)_{[1]}=m_{[0]}(m_{[1](-1)}\cdot\alpha^{-2}(a))\otimes \gamma^{2}(m_{[1](0)}) \end{equation}

for any $m\in M$ and $a\in A$.

\item $(\cC,\chi)=(A\otimes C, \alpha\otimes\gamma)$ is an $(A,\alpha)$-Hom-coring with comultiplication and counit given by (\ref{comult-of-associated-Hom-coring}) and (\ref{counit-of-associated-Hom-coring}), respectively, and the $(A,\alpha)$-Hom-bimodule structure $a(a'\otimes c)=\alpha^{-1}(a)a'\otimes \gamma(c)$, $(a'\otimes c)a=a'(c_{(-1)}\cdot \alpha^{-2}(a))\otimes\gamma^{2}(c_{(0)})$ for $a,a'\in A$ and $c\in C$.
\end{enumerate}
A triple $[(A,\alpha),(B,\beta),(C,\gamma)]$ satisfying the above assumptions of the proposition is called an {\it alternative Hom-Doi-Koppinen datum}.
\end{proposition}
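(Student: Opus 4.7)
The plan is to follow the template of Proposition~\ref{Hom-coring-assoc-Doi-Koppinen-datum}, taking advantage of the fact that the present setting is precisely its dual: the bialgebra $(B,\beta)$ now \emph{acts} on $(A,\alpha)$ and \emph{coacts} on $(C,\gamma)$, so the Hom-module-algebra condition (\ref{Hom-module-algebra-cond}) and the Hom-comodule-coalgebra condition (\ref{Hom-comodule-coalgebra-cond}) play the roles previously taken by the Hom-comodule-algebra condition (\ref{Hom-comodule-algebra-cond}) and the Hom-module-coalgebra condition (\ref{Hom-module-coalgebra-cond}). I would treat the three numbered assertions in turn.

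For (1), I verify the five axioms (\ref{hom-entwining-cond-1})--(\ref{hom-entwining-cond-4}) and (\ref{entwining-map-cond}) for $\psi(c\otimes a)=c_{(-1)}\cdot\alpha^{-1}(a)\otimes\gamma(c_{(0)})$. Axioms (\ref{hom-entwining-cond-3}), (\ref{hom-entwining-cond-4}) and the fact that $\psi\in\ctH{\cM_k}$ fall out immediately from the unit relations $b\cdot 1_A=\varepsilon_B(b)1_A$ and $c_{(-1)}\varepsilon_C(c_{(0)})=1_B\varepsilon_C(c)$ together with the morphism conditions $\rho_A\circ(\beta\otimes\alpha)=\alpha\circ\rho_A$ and $\rho\circ\gamma=(\beta\otimes\gamma)\circ\rho$. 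For (\ref{hom-entwining-cond-1}) I would unfold the left-hand side to $\beta(c_{(-1)})\cdot(\alpha^{-1}(a)\alpha^{-1}(a'))\otimes\gamma^2(c_{(0)})$, apply (\ref{Hom-module-algebra-cond}) to split the $B$-action across the product, and then rewrite the triple $\beta(c_{(-1)1})\otimes\beta(c_{(-1)2})\otimes\gamma^2(c_{(0)})$ via the coassociativity of the left Hom-coaction (i.e.\ $\beta(c_{(-1)1})\otimes c_{(-1)2}\otimes\gamma^{-1}(c_{(0)})=c_{(-1)}\otimes c_{(0)(-1)}\otimes c_{(0)(0)}$) so as to recognise it as the iterated entwining $a_\kappa a'_\lambda\otimes\gamma(c^{\kappa\lambda})$. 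Axiom (\ref{hom-entwining-cond-2}) is settled dually: the right-hand side unpacks, via the Hom-associativity of the left $B$-action on $A$, to $\beta^{-1}(c_{1(-1)}c_{2(-1)})\cdot\alpha^{-2}(a)\otimes\gamma(c_{1(0)})\otimes\gamma(c_{2(0)})$, after which the Hom-comodule-coalgebra identity (\ref{Hom-comodule-coalgebra-cond}) collapses $c_{1(-1)}c_{2(-1)}\otimes c_{1(0)}\otimes c_{2(0)}$ to $c_{(-1)}\otimes c_{(0)1}\otimes c_{(0)2}$, matching the left-hand side.

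Part (2) is a direct substitution: inserting $\psi(m_{(1)}\otimes\alpha^{-1}(a))=m_{(1)(-1)}\cdot\alpha^{-2}(a)\otimes\gamma(m_{(1)(0)})$ into (\ref{entwined-Hom-module-cond}) and absorbing the outer $\gamma$ gives
\[
\rho^{M}(ma)=m_{(0)}\alpha^{-1}(a)_{\kappa}\otimes\gamma(m_{(1)}^{\ \ \kappa})=m_{(0)}(m_{(1)(-1)}\cdot\alpha^{-2}(a))\otimes\gamma^{2}(m_{(1)(0)}),
\]
which is exactly (\ref{alternative-Doi-Koppinen-Hom-module-cond}); the argument is reversible. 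For part (3), with (1) established, Theorem~\ref{Hom-Coring-Assoc-Hom-Entw-Strc}(1) supplies the comultiplication, counit and left $(A,\alpha)$-action on $(A\otimes C,\alpha\otimes\gamma)$ at once, and only the right $(A,\alpha)$-action needs to be unpacked from the general formula $(a'\otimes c)a=a'\alpha^{-1}(a)_{\kappa}\otimes\gamma(c^{\kappa})$, which after substituting $\psi$ yields $a'(c_{(-1)}\cdot\alpha^{-2}(a))\otimes\gamma^{2}(c_{(0)})$, as claimed.

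The main obstacle I foresee is the meticulous bookkeeping of the $\alpha^{\pm 1}$, $\beta^{\pm 1}$ and $\gamma^{\pm 1}$ shifts in (\ref{hom-entwining-cond-1}) and (\ref{hom-entwining-cond-2}): every application of $\psi$ introduces an $\alpha^{-1}$ on the algebra factor and a $\gamma$ on the coalgebra factor, and these must be reconciled with the shifts produced by the coassociativity of $^C\!\rho$, the Hom-associativity of $_A\!\rho$ (in the form $c_{1(-1)}\cdot(\beta^{-1}(c_{2(-1)})\cdot\alpha^{-3}(a))=\beta^{-1}(c_{1(-1)}c_{2(-1)})\cdot\alpha^{-2}(a)$), and the intertwining of $\alpha,\beta,\gamma$ with all the structure maps. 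Once this bookkeeping is controlled, the three verifications are mechanical duals of the corresponding steps in the proof of Proposition~\ref{Hom-coring-assoc-Doi-Koppinen-datum}.
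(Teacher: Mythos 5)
Your proposal is correct and follows essentially the same route as the paper: the paper likewise verifies (\ref{hom-entwining-cond-1}) and (\ref{hom-entwining-cond-2}) by exactly the chain you describe (split the $B$-action via (\ref{Hom-module-algebra-cond}), then invoke the coassociativity of ${}^{C}\rho$, respectively unpack via Hom-associativity of the action and collapse with (\ref{Hom-comodule-coalgebra-cond})), and defers the remaining axioms and parts (2)–(3) to the same substitution into Theorem~\ref{Hom-Coring-Assoc-Hom-Entw-Strc} and Proposition~\ref{Hom-coring-assoc-Doi-Koppinen-datum} that you spell out. The bookkeeping of the $\alpha^{\pm1},\beta^{\pm1},\gamma^{\pm1}$ shifts you flag as the main hazard is handled in the paper exactly as you anticipate, e.g.\ via $c_{1(-1)}\cdot(\beta^{-1}(c_{2(-1)})\cdot\alpha^{-3}(a))=(\beta^{-1}(c_{1(-1)})\beta^{-1}(c_{2(-1)}))\cdot\alpha^{-2}(a)$.
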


\begin{proof} The first two conditions for Hom-entwining structures will be checked and the rest of the proof can be completed by performing similar computations as in Proposition (\ref{Hom-coring-assoc-Doi-Koppinen-datum}). For $a, a'\in A$ and $c\in C$,

\begin{eqnarray*}(aa')_{\kappa}\otimes \gamma(c)^{\kappa}&=&\gamma(c)_{(-1)}\cdot \alpha^{-1}(aa')\otimes \gamma(\gamma(c)_{(0)})\\
&=&\beta(c_{(-1)})\cdot (\alpha^{-1}(a)\alpha^{-1}(a'))\otimes \gamma^{2}(c_{(0)})\\
&=&(\beta(c_{(-1)})_1\cdot \alpha^{-1}(a))(\beta(c_{(-1)})_2\cdot \alpha^{-1}(a'))\otimes \gamma^{2}(c_{(0)})\\
&=&(\beta(c_{(-1)1})\cdot \alpha^{-1}(a))(\beta(c_{(-1)2})\cdot \alpha^{-1}(a'))\otimes \gamma^{2}(c_{(0)})\\
&=&(\beta(\beta^{-1}(c_{(-1)}))\cdot \alpha^{-1}(a))(\beta(c_{(0)(-1)})\cdot \alpha^{-1}(a'))\otimes \gamma^{2}(\gamma(c_{(0)(0)}))\\
&=&(c_{(-1)}\cdot \alpha^{-1}(a))(\gamma(c_{(0)})_{(-1)}\cdot \alpha^{-1}(a'))\otimes \gamma^{2}(\gamma(c_{(0)})_{(0)})\\
&=&(c_{(-1)}\cdot \alpha^{-1}(a))a'_{\lambda}\otimes \gamma(\gamma(c_{(0)})^{\lambda})\\
&=&a_{\kappa}a'_{\lambda}\otimes \gamma(c^{\kappa\lambda}),
\end{eqnarray*}
\begin{eqnarray*}\alpha^{-1}(a_{\kappa})\otimes c^{\kappa}_{\ 1}\otimes c^{\kappa}_{\ 2}&=&\alpha^{-1}(c_{(-1)}\cdot \alpha^{-1}(a))\otimes \gamma(c_{(0)})_1\otimes\gamma(c_{(0)})_2\\
&=&\beta^{-1}(c_{(-1)})\cdot \alpha^{-2}(a)\otimes \gamma(c_{(0)1})\otimes\gamma(c_{(0)2})\\
&=&\beta^{-1}(c_{1(-1)}c_{2(-1)})\cdot \alpha^{-2}(a)\otimes \gamma(c_{1(0)})\otimes\gamma(c_{2(0)})\\
&=&(\beta^{-1}(c_{1(-1)})\beta^{-1}(c_{2(-1)}))\cdot \alpha^{-2}(a)\otimes \gamma(c_{1(0)})\otimes\gamma(c_{2(0)})\\
&=&c_{1(-1)}\cdot(\beta^{-1}(c_{2(-1)})\cdot \alpha^{-3}(a))\otimes \gamma(c_{1(0)})\otimes\gamma(c_{2(0)})\\
&=&c_{1(-1)}\cdot\alpha^{-1}(c_{2(-1)}\cdot \alpha^{-2}(a))\otimes \gamma(c_{1(0)})\otimes\gamma(c_{2(0)})\\
&=&(c_{2(-1)}\cdot \alpha^{-1}(\alpha^{-1}(a)))_{\kappa}\otimes c_1^{\ \kappa}\otimes\gamma(c_{2(0)})\\
&=&\alpha^{-1}(a)_{\lambda\kappa}\otimes c_1^{\ \kappa}\otimes c_2^{\ \lambda}.
\end{eqnarray*}
\end{proof}

\section{Acknowledgments}
The author would like to thank Professor Christian Lomp for his valuable suggestions. This research was funded by the European Regional Development Fund through the programme COMPETE and by the Portuguese Government through the FCT- Fundação para a Ciência e a Tecnologia under the project PEst-C/MAT/UI0144/2013. The author was supported by the grant SFRH/BD/51171/2010.

\end{document}